\newtheorem{Theorem}{Theorem}[section]
\newtheorem{Lemma}[Theorem]{Lemma}
\newtheorem{Corollary}[Theorem]{Corollary}
\newtheorem{Proposition}[Theorem]{Proposition}
\newtheorem{Example}[Theorem]{Example}
\newtheorem{Remark}[Theorem]{Remark}
\newtheorem*{Definition}{Definition}
\def\ima{\operatorname{im}}
\def\ed{\operatorname{end}}
\def\bg{\operatorname{indeg}}
\def\dept{\operatorname{depth}}
\def\gen{\operatorname{gen}}
\def\coker{\operatorname{coker}}
 \def\reg{\operatorname{reg}}
 \def\deg{\operatorname{deg}}
 \def\adeg{\operatorname{adeg}}
\def\hdeg{\operatorname{hdeg}}
\def\indeg{\operatorname{indeg}}
\def\Ext{\operatorname{Ext}}
\def\ext{\operatorname{Ext}}
\def\tor{\operatorname{Tor}}
\def\Hom{\operatorname{Hom}}
\def\hom{\operatorname{Hom}}
\def\homgr{\operatorname{Homgr}}
\def\ol{\overline}
\def\mfrak{{\frak m}}
\def\im{{\mathfrak m}}
\def\d{{\delta}}
\def\b{{\beta}}
\def\ra{{\rightarrow}}
\newcommand{\bt}{\begin{Theorem}}
\newcommand{\et}{\end{Theorem}}
\newcommand{\bl}{\begin{Lemma}}
\newcommand{\el}{\end{Lemma}}
\newcommand{\bco}{\begin{Corollary}}
\newcommand{\eco}{\end{Corollary}}
\newcommand{\bp}{\begin{Proposition}}
\newcommand{\ep}{\end{Proposition}}
\newcommand{\bex}{\begin{Example}}
\newcommand{\eex}{\end{Example}}
\newcommand{\brm}{\begin{Remark}}
\newcommand{\erm}{\end{Remark}}
\begin{document}
\title{Castelnuovo-Mumford regularity of Ext modules and homological degree}
\author{Marc Chardin} 
\address{Institut de Math\'ematiques de Jussieu,
4, place Jussieu, F-75005 Paris, France.}
\email{chardin@math.jussieu.fr} 

\author{Dao Thanh Ha}
\address{Department of Mathematics,  University of Vinh, Vietnam}
\email{thahanh@yahoo.com}

\author{L\^e Tu\^an Hoa}
\address{Institute of Mathematics\\ 18 Hoang Quoc Viet Road\\ 10307 Hanoi, Vietnam}
\email{lthoa@math.ac.vn}
\thanks{The second and the third  authors were  supported  in part by the National Basic Research 
Program (Vietnam). The third author also would like to thank University of Paris 6 for the financial support and hospitality during his visit in 2007 when this work was started.}

\keywords{Castelnuovo-Mumford regularity, local
cohomology, canonical module, deficiency module, homological degree.}
 \subjclass{13D45}

\maketitle

\begin{abstract}  
Bounds for the Castelnuovo-Mumford regularity of Ext modules, over a polynomial ring over a field,
are given in terms of the initial degrees, Castelnuovo-Mumford regularities and number of generators of the two graded modules
involved. These general bounds are refined in the case the second module is the ring. Other estimates,
for instance on the size of graded pieces of these modules, are given.  We also derive
a bound on the homological degree in terms of the Castelnuovo-Mumford regularity. This answers positively
a question raised by Vasconcelos.
\end{abstract}

\section{Introduction}

Let $R$ be a polynomial ring in $n$ variables over a field and 
 $M$  be a finitely generated graded $R$-module. We are interested here in  estimating
 several invariants of $M$ in terms of the degrees in a presentation of $M$, or in terms
 of the Castelnuovo-Mumford regularity of $M$.  
  
 The homological degree was introduced by Vasconcelos and his students ten years ago (see \cite{DGV}). It is proved to be useful in many aspects (see, e.g.,  Chapter 9 in \cite{Va} and \cite{HHy}). One of our motivations was to answer positively a question of Vasconcelos \cite[page 261]{Va} on the existence of
 a polynomial bound on the homological degree of a module in terms of its regularity. In the case
 of a standard graded algebra $A$ of dimension $d>0$ with $n$ generators our bound is:
 $$  \hdeg (A)\leq {{\reg (A)+n}\choose{n}}^{2^{(d-1)^2}}. $$
  We derive this bound from an estimate of the homological degree in terms of the Hilbert 
 polynomial of a module, namely: assume $M$ has dimension $d>0$, regularity $r$ and Hilbert 
 polynomial $P$, then 
 $$
 \hdeg (M)\leq P(r)^{2^{(d-1)^2}}
 $$
 if $\dept (M)>0$ (the general case easily reduces
 to the result above).
 
 Another new result concerns estimates on the size of the coefficients of the Hilbert polynomial of a module $M$ in terms
 of its regularity and the degree of its quotient by $\dim M$ general linear forms. The bound in Theorem  \ref{Hilb3} refines 
 and extends earlier results of the third author, and is rather sharp.
 
 Several estimates are necessary to obtain these results. One concerns the regularity of the modules
 $\Ext^i_R(M,R)$ in terms of the regularity and the Hilbert polynomial of $M$. This problem was first studied in \cite{HaH}, and then continued in \cite{HHy} for the case $M$ is cyclic. It is an interesting problem because the regularity of the modules
 $\Ext^i_R(M,R)$ in some sense controls the behavior of the local cohomology module
$H^i_{\im}(M)$ in negative components.  The bound found in \cite[Theorem 14]{HHy} for the case $M$ being cyclic is a huge number and its proof required a rather complicated computation. Our bound here works for all modules and is much smaller, see 
 Theorem \ref{newregextMR}.  Its proof relies on the general estimates on $\reg (\Ext^i_R(M,N))$
 for a pair of modules proved in Section \ref{A} together with considerations on the effect of 
 the truncation of a module on its Ext's into $R$.  
An ingredient of the proof of \ref{newregextMR} that may be of use elsewhere is Corollary \ref{BettiHPoly}, which expresses the Betti numbers of a module with a
 linear resolution in terms of the values of its Hilbert polynomial at some integers around the regularity.
 
  Another type of estimates we establish concerns the vector space dimension of graded components of 
  Ext modules. Besides the rather obvious estimates mentioned in Section \ref{A}, we prove more delicate bounds
  in Theorem \ref{BoundExt1} in terms of the Hilbert polynomial of the module and of its regularity.  These can be used 
  in turn to estimate, via graded local duality, the size of the graded components of local cohomology modules
  as the third author first did in \cite[Theorem 3.4]{H} for ideals. Our proof here is completely different from that in \cite{H}: it is a direct proof, shorter and more elegant. 
  
The general estimates on the regularity of $\ext^i_R(M,N)$ for two graded modules $M$ and
$N$ are proved in Section \ref{A}. We use the fact that these modules are homology modules
of a complex of free modules whose shifts are controlled in terms of the ones appearing in
free $R$-resolutions of $M$ and $N$. The regularity of the homology of a
complex of free $R$-modules is estimated in terms of the regularities of cokernels of maps appearing
in the complex, which are in turn bounded by the general results of Fall,
Nagel and the first author in \cite{CFN}. In Section \ref{A2} we give a bound for  the regularity of the Ext modules.
 Section \ref{B} is devoted to the study of graded components of 
  Ext modules and the Hilbert coefficients. In the last Section \ref{C} we establish bounds  for the homological degree of a module (see Theorem \ref{B-hdeg1} and Theorem \ref{B-hdeg2}).

\section{General estimates on the regularity of Ext modules}\smallskip
\label{A}
Let $R$ be a polynomial ring in $n$ variables over a field, with $n\geq 2$, and 
 $M$ and $N$ be finitely generated graded $R$-modules.

Let $H_P$ denote the Hilbert function of a graded $R$-module $P$.

We will  estimate $\reg (\ext^i_R(M,N))$ in terms of the degrees appearing in free $R$-resolutions of $M$
 and $N$. For doing so, we first give a bound on the regularity of the homology modules of a graded complex
 of free $R$-modules in terms of the shift that appears in it. Convention: ${a\choose b} = 0$  if $a<b$.
 
\bl\label{regHpol}
Let $F^{\bullet}$ be a graded complex of free $R$-modules with $F^i:=\oplus _{f^i\leq j\leq b^i}R[-j]^{\b_{ij}}$. 
Set $T^i:=\sum_j \b_{i,j}$. Then, for any $i$,

(1) $\indeg (H^i(F^{\bullet}))\geq \indeg (F^{i})=f^{i}$,

(2)
$$
\reg (H^i(F^{\bullet}))\leq \max\{ b^{i},b^{i+1},[T^{i+1}(b^i-f^{i+1})]^{2^{n-2}}+f^{i+1}+2,[T^{i}(b^{i-1}-f^{i})]^{2^{n-2}}+f^{i}\} ,
$$

(3) for any $\mu \geq f^{i}$,
$$
\dim_k ((H^i(F^{\bullet}))_{\mu}\leq \dim_k (F^{i})_{\mu}\leq T^{i}{{\mu -f^i +n-1}\choose{n-1}},
$$

(4) for any $j$ and any $\mu\geq f^{i}+j$,
$$
\dim_k (\tor_j^R(H^i(F^{\bullet}),k))_{\mu}\leq T^{i}{{n}\choose{j}}{{\mu -f^i -j+n-1}\choose{n-1}}.
$$
\el
\begin{proof}
Statements (1) and (3) are obvious and (4) follows from (3) and from the fact that $k$ is resolved as an
$R$-module by the Koszul complex on the variables of $R$ (see the proof of \ref{dimTor}). 

We now prove statement (2).
Set $H^i:=H^i(F^{\bullet})$. The exact sequences $0\ra \ima (d^{i-1})\ra \ker (d^{i})\ra H^i \ra 0$,
$0\ra \ker (d^i)\ra F^i\ra F^{i+1}\ra \coker (d^i)\ra 0$ and
$0\ra  \ima (d^{i-1})\ra F^i\ra \coker  (d^{i-1})\ra 0$ imply the estimates
$$
\begin{array}{rl}
\reg (H_i)&\leq \max\{ \reg ( \ker (d_{i})),\reg (\ima (d^{i-1}))-1\} \\
	&\leq \max\{ \reg (F^i),\reg (F^{i-1})+1,\reg (\coker (d^i))+2,\reg (\coker (d^{i-1}))\} .\\
\end{array}
$$
By \cite[Theorem 3.5]{CFN}, the presentation $F^{j}\ra F^{j+1}\ra \coker (d^{j})\ra 0$ shows that, for any $j$,
$$
\reg (\coker (d^{j}))\leq [T^{j+1}(b^j-f^{j+1})]^{2^{n-2}}+f^{j+1}.
$$
The inequality in (2) follows.
\end{proof}

\bco\label{HLC}
Let $F^{\bullet}$ be a graded complex of free $R$-modules with $F^i:=R[r+i]^{T_{i}}$. Then,

(1) $\indeg (H^i(F^{\bullet}))\geq \indeg (F^{i})=-r-i$,

(2)
$$
\reg (H^i(F^{\bullet}))\leq \max\{ T_{i+1}^{2^{n-2}}+1, T_{i}^{2^{n-2}}\} -r-i,
$$

(3) for any $\mu \geq -r-i$,
$$
\dim_k ((H^i(F^{\bullet}))_{\mu}\leq \dim_k (F^{i})_{\mu}\leq T_{i}{{\mu +r+i +n-1}\choose{n-1}},
$$

(4) for any $j$ and any $\mu\geq -r-i+j$,
$$
\dim_k (\tor_j^R(H^i(F^{\bullet}),k))_{\mu}\leq T_{i}{{n}\choose{j}}{{\mu +r+i -j+n-1}\choose{n-1}}.
$$
\eco

For a finitely generated graded $R$-module $P$, set $T_i^P:=\dim_k \tor_i^R(P,k)$, $f_i^P:=\indeg ( \tor_i^R(P,k))$ and $b_i^P:=\reg ( \tor_i^R(P,k))$ (recall that $\indeg (0)=+\infty$ and $\reg(0)=-\infty$). 

\bt\label{regExtMN} Let $M$ and $N$ be finitely generated graded modules over the polynomial ring $R$.
With notations as above, 
set $T^i=\sum_{p-q=i}T_p^MT_q^N$, $r_M:=\reg (M)-\indeg (M)$, $r_N :=\reg (N)-\indeg (N)$ and
$\d :=\indeg (M)-\indeg (N)$.  Then, for any $i$,

(1) $\indeg (\ext^i_R(M,N))\geq e_i:=\indeg (N)-\reg (M)-i$, and equality holds for some $i$,

(2)
$$
\reg (\ext^i_R(M,N))+i\leq  (r_M+r_N+1)^{2^{n-2}} \max\{ T^{i} ,T^{i+1}\}^{2^{n-2}} +1-\d ,
$$

(3) for any $\mu \geq e_i$,
$$
\dim_k (\ext^i_R(M,N))_{\mu}\leq T^{i}{{\mu -e_i+n-1}\choose{n-1}},
$$

(4) for any $j$ and any $\mu \geq e_i+j$, 
$$
\dim_k (\tor_j^R(\ext^i_R(M,N),k))_{\mu}\leq T^{i}{{n}\choose{j}}{{\mu -e_i -j+n-1}\choose{n-1}}.
$$
\et

\begin{proof} For (1), see \cite[3.3]{CD} 
We now prove (2), for which we may, and will,  assume that $\indeg (M)=\indeg (N)=0$. Let $F^M_\bullet$ (resp. $F^N_\bullet$) be a minimal free $R$-resolution of M (resp. $N$) and set 
$C^\bullet :=\homgr_R(F^M_\bullet,F^N_\bullet )$.
Then $\ext^i_R(M,N)\simeq H^i(C^\bullet )$. One has,
$$\begin{array}{ll} f^i  := & \indeg (C^i)=\min_{p-q=i}\{ f_q^N -b_p^M\}\geq -i-\reg (M), \\
b^i:= & \reg (C^i)=\max_{p-q=i}\{ b_q^N-f_p^M\}\leq -i+\reg (N).
\end{array}$$
Set $K:=\reg (M)+\reg (N)+1$ and $\epsilon^{i}:=f^{i}+i+\reg (M)\geq 0$. By Corollary \ref{HLC}, it follows that
$$ \begin{array}{rl}
\reg (\ext^i_R(M,N))&\leq \max \{ b^i,\ b^{i+1},\ [T^{i+1}(b^i-f^{i+1})]^{2^{n-2}}+f^{i+1}+2,\\ & \hskip1cm [T^{i}(b^{i-1}-f^{i})]^{2^{n-2}}+f^{i}\} \\
&\leq \max \{ \reg (N)-i,\ [T^{i+1}(K-\epsilon^{i+1} )]^{2^{n-2}}-i+1+\epsilon^{i+1},\\
& \hskip1cm [T^{i}(K-\epsilon^{i} )]^{2^{n-2}}-i+\epsilon^{i}\} \\
&\leq \max\{ KT^{i} ,\ KT^{i+1}\}^{2^{n-2}}+1-i.
\end{array}
$$

Finally (3) and (4) follow from the estimates in Lemma \ref{regHpol} (3) and (4).
\end{proof}

Let $\mu (P)$ denote the minimal number of generators of a module $P$. The following lemma, in the spirit of some results above, can be used together with estimates on the regularities of $M$ and $N$ (see \cite{CFN}) to bound the regularity of $\ext^i_R(M,N)$ in terms of presentations of $M$ and $N$.

\bl\label{dimTor}
For any $i$,
$$
\dim_k \tor_i^R(M,k)\leq   \mu (M){{n}\choose{i}}{{\reg (M)-\indeg (M)+n}\choose{n}}.
$$
\el

\begin{proof}   We may assume that  $\indeg (M)\geq 0$. Then one has $\tor_{i}^{S}(M,k)\simeq H_i(x;M)$.
In particular $\dim_k (\tor_{i}^{S}(M,k))_\mu \leq {{n}\choose{i}}\dim_k (M_{\mu -i})\leq {{n}\choose{i}}  \mu (M) {{\mu -i+n-1}\choose{n-1}}$. It follows that 
$$
\dim_k \tor_i^R(M,k)\leq {{n}\choose{i}}  \mu (M)\sum_{\mu =i}^{\reg (M)+i} {{\mu -i+n-1}\choose{n-1}},
$$
from which the claimed inequality follows.
\end{proof}

\section{Refined estimate for the regularity of the modules $\ext^i_R(M,R)$}\smallskip
\label{A2}

Recall that $R$ is a standard graded polynomial ring in $n$ variables over a field $k$,  and let $\im$ be the maximal graded ideal of $R$. Let  $M$ be a finitely generated graded $R$-module. Set $\ol{M} :=M/H^0_\im (M)$ and $\Gamma M:=D_\im (M)\simeq
\oplus_{\mu} H^0({\bf P}^{n-1},\tilde{M}(\mu ))$.

We first make some remarks on the truncation of $M$. 

Let $t$ be an integer and set $M':=M_{\geq t}$. For example, if $M=R/I$ and $t\geq 0$, then 
$M':=\im^t/(I\cap\im^t)$. 

One has $\ext^i_R(M',R)=\ext^i_R(M,R)$ for $i<n-1$, and $\Gamma M=\Gamma M'$. We also have $H^0_\im (M')=H^0_\im (M)_{\geq t}$ and the commutative diagram
$$
\xymatrix{
0\ar[r]&H^0_\im (M')\ar[r]\ar[d]&M'\ar[r]\ar[d]&\Gamma M'\ar[r]\ar^{\simeq}[d]&H^1_\im (M')\ar[r]\ar[d]&0\\
0\ar[r]&H^0_\im (M)\ar[r]&M\ar[r]&\Gamma M\ar[r]&H^1_\im (M)\ar[r]&0\\
}
$$
shows that $H^1_\im (M')_{\geq t}=H^1_\im (M)_{\geq t}$ and $H^1_\im (M')_{\mu}=(\Gamma M)_{\mu}$ for $\mu <t$.

It follows that $\reg (M')=\max\{ t,\reg (M)\}$.  For a graded $R$-module $N$, set $N_{<t}:=N/(N_{\geq t})$. One has an exact sequence:
$$
0\ra H^0_\im (M)_{<t}\ra M_{<t}\ra H^1_\im (M')\ra H^1_\im (M)\ra 0,
$$
which gives by duality an exact sequence
$$
0\ra \ext^{n-1}_R(M,R[-n])\ra \ext^{n-1}_R(M',R[-n])\ra ^*\hom_R(M/H^0_\im (M),k)_{>-t}\ra 0,
$$
that in turn induces the exact sequence
\begin{equation*}
\xymatrix{
0\ar[r]& H^0_\im (\ext^{n-1}_R(M,R))\ar[r]& H^0_\im (\ext^{n-1}_R(M',R))\ar[r]& ^*\hom_R(M/H^0_\im (M),k)[n]_{>-t-n}&\\}
\end{equation*}
\begin{equation}\label{Eregext}
\xymatrix{
&\ar[r]& H^1_\im (\ext^{n-1}_R(M,R))\ar[r]& H^1_\im (\ext^{n-1}_R(M',R)\ar[r]&0.\\}
\end{equation}

Assume $t>\indeg (M)$ (i.e. $M'\not= M$). Then  $^*\hom_R(M/H^0_\im (M),k)[n]_{>-t-n}$ is of finite length supported in degrees $\in [-t-n+1, -\indeg (M/H^0_\im (M))-n]$. 
 It shows that
$$
\reg ( \ext^{n-1}_R(M,R))\leq \max\{ \reg ( \ext^{n-1}_R(M',R)),-\indeg (M/H^0_\im (M))-n\} .
$$

We gather direct consequences of the above facts in the following

\brm\label{regexttrunc}
Let $M':=M_{\geq t}$, then 

(i) $\reg (M')=\max\{ t,\reg (M)\}$,

(ii) $\ext^i_R(M',R)=\ext^i_R(M,R)$ for $i<n-1$,

(iii) $\reg ( \ext^{n-1}_R(M,R))\leq \max\{ \reg ( \ext^{n-1}_R(M',R)),-\indeg (M)-n\}$,

(iv) $\ext^n_R(M',R)=\ext^n_R(M,R)_{\leq -n-t}$,

(v)  $\ext^n_R(M,R)$ is a module of  finite length whose intitial degree is $-\ed (H^0_\im (M))-n\geq -\reg (M)-n$ and whose regularity is  $\bg (H^0_\im (M))-n \leq -\indeg (M)-n$.
\erm

Furthermore, 
$$
\mu (M')=\dim_k (\tor_0^R(M',k))=\dim_k (\tor_0^R(M',k))_r=H_M(r)\leq \mu (M){{r+n-1}\choose{n-1}}.
$$

\brm\label{noethnorm}
Let $M$ be a finitely generated graded $R$-module of dimension $d$. To estimate regularity,
we may assume $k$ is infinite. In this case, let $S$ be a polynomial ring in $d$ variables over
$k$ inside 
$R$  such that $M$ is finite over $S$. We may assume that $S=k[X_1,\ldots ,X_d]$. One has 
a graded isomorphism of $S$-modules 
$$
\ext^i_R(M,R)\simeq \ext^{i-n+d}_S (M,S)[n-d].
$$ 
It follows that $\ext^i_R(M,R)=0$ for $i<n-d$ and 
$$
\reg (\ext^i_R(M,R))=\reg (\ext^{i-n+d}_S (M,S))-(n-d),\quad \forall i.
$$
\erm

Notice that this last equality can be written as
$$
\reg (\ext^i_R(M,R))+i=\reg (\ext^{i-n+d}_S (M,S))+(i-n+d),\quad \forall i.
$$

We will need the following formula for the Betti numbers of a module with a linear resolution, which may be of use for other applications. 

\bp\label{tortrunc}
Let $M$ be a finitely generated graded $R$-module 
with $\reg (M)=\indeg (M)=:r$ and $H^0_\im (M)=0$. 
Set $M':=(M/lM)_{\geq r+1}$, for a linear non zero-divisor $l$.
Then, 
 $$
 \dim_k \tor_i^R(M,k)={{n-1}\choose{i}}P_M(r)-\dim_k \tor_{i-1}^{R/lR}(M',k).
 $$
\ep

\begin{proof} Recall that if $N$ is a module with $\indeg (N)=\reg (N)=s$, then
$\tor_i^R (N,k)$ is concentrated in degree $s+i$ for all $i$.
Notice that $\reg (M_{\geq r+1})=\indeg ( M_{\geq r+1})=r+1$, that $M'=0$ if $\dim M=1$, and
that $\reg (M')=\indeg (M')=r+1$ if $\dim M\geq 2$. We induct on $i$. The case $i=-1$ 
is trivially satisfied ($i=0$ is also clear). The exact sequences
$$ 0\ra  M(-1)\ra M_{\geq r+1}\ra M' \ra 0 $$
and
$$ 0\ra  M_{\geq r+1}\ra M \ra M_{r}\ra 0 $$
induce exact sequences
$$ 0\ra  \tor_i^R(M,k)\ra  \tor_i^R(M_{\geq r+1},k)\ra  \tor_i^R(M',k)\ra 0  $$
 and
 $$ 0\ra  \tor_{i+1}^R(M,k)\ra  \tor_{i+1}^R(M_r,k)\ra  \tor_{i}^R(M_{\geq r+1},k)\ra 0, $$
 which shows that
 $$  \begin{array}{rl}
  \dim_k \tor_{i+1}^R(M,k)&= \dim_k \tor_{i+1}^R(M_r,k)- \dim_k \tor_{i}^R(M,k)- \dim_k \tor_{i}^R(M',k)\\
  &= {{n}\choose{i+1}}P(r)- \dim_k \tor_{i}^R(M,k)-\dim_k \tor_{i}^{R/lR}(M',k)\\
& \hskip1cm -\dim_k \tor_{i-1}^{R/lR}(M',k)\\
   &= {{n}\choose{i+1}}P(r)-{{n-1}\choose{i}}P_M(r)-\dim_k \tor_{i}^{R/lR}(M',k)\\
    &= {{n-1}\choose{i+1}}P_M(r)-\dim_k \tor_{i}^{R/lR}(M',k)\\
\end{array}
  $$
  by induction on $i$.
\end{proof}

For a polynomial $P$, 
set $\Delta P(t):=P(t)-P(t-1)$ and $\Delta^i P:=\Delta (\Delta^{i-1} P)$.

\bco\label{BettiHPoly}
Let $M$ be a finitely generated graded $R$-module of dimension $d$
with $\reg (M)=\indeg (M)=:r$ and $H^0_\im (M)=0$. 
Then, 
 $$
 \dim_k \tor_i^R(M,k)=\sum_{\ell =0}^{\min\{ i,d-1\}}(-1)^{\ell}{{n-\ell -1}\choose{i-\ell}}\Delta^{\ell}P_M(r+\ell ).
 $$
\eco

\begin{proof} 
Notice that $M'$ has positive depth, because $M/lM$ has regularity $r$. As $M'$
has regularity $r+1$, Hilbert polynomial $\Delta^1 P_M$ and $R/lR$ is isomorphic to 
a polynomial ring in $n-1$ variables, the claim  follows by induction.
\end{proof}

\bt\label{newregextMR}
Set $d:=\dim M$ and $\bar{r} :=\reg (M/H^0_\im (M))$. 

(1) If $d<2$, then $\reg (\ext^i_R(M,R)\leq -\indeg (M)-i$ for any $i$.

(2) If $d\geq 2$, then

$\quad$ (a) $\reg (\ext^n_R(M,R)+n\leq -\indeg (M)$,

$\quad$ (b) $\reg (\ext^{n-1}_R(M,R))+(n-1)\leq \max\{ P_M (\bar{r})-\Delta^1P_M (\bar{r})-\bar{r}, - \indeg(M) -1\}$,

$\quad$ (c) for $i>1$
$$ \reg (\ext^{n-i}_R(M,R))+(n-i) \leq \left[  C_{d,d-i}\; P_M (\bar{r}) \right] ^{2^{d-2}}-\bar{r}+1,$$
with $C_{d,j}:=\max\{ {{d-1}\choose{j}},{{d-1}\choose{j+1}} \}$.
\et

\begin{proof}   (1) Recall that $\ext^i_R(M,R)=0$ for $i<n-d$. By Remark \ref{regexttrunc} (v), it  remains  to
check that the inequality holds when $d=1$ and $i=n-1$. When $d=1$, $\ext^{n-1}_R(M,R)\simeq \ext^{n-1}_R(M/H^0_\im (M),R)$, which shows  that 
$\reg (\ext^{n-1}_R(M,R))=-\indeg (M/H^0_\im (M))-(n-1)\leq -\indeg (M)-(n-1)$, because $M/H^0_\im (M)$ is Cohen-Macaulay of 
dimension $1$.

(2)(a) was proved in Remark \ref{regexttrunc} (v). For (2)(b) and (2)(c), by Remark \ref{regexttrunc} (i)-(iii) and Remark \ref{noethnorm}
we are reduced to show this estimate for $M$ with $\indeg (M)=\reg (M)$ and $d=n$. Applying Corollary \ref{HLC}
to the $R$-dual of a minimal free $R$-resolution of $M$, we deduce that, setting $T_i:=\dim_k \tor_i^R(M,k)$, one
has 
$$
\reg (\ext^i_R(M,R))\leq \max\{ T_i^{2^{n-2}},T_{i+1}^{2^{n-2}}+1\} -\bar{r}-i.
$$
Hence the conclusion follows from Proposition \ref{tortrunc}.
\end{proof}

\section{Hilbert function and Hilbert coefficients}\smallskip
\label{B}

In this section we will estimate graded components of the Hilbert function of $\Ext^i_R(M,R)$. Based on such an estimation we will give bounds for the Hilbert coefficients in terms of the Castelnuovo-Mumford regularity of $M$.

\bl \label{BoundExt3} Let $\ol{M}:=M/H^0_\im (M)$ and $\ol{r} = \reg (\ol{M})$. Then

\begin{itemize}
\item[(i)] $P_M(t)  = H_{\ol{M}}(t)$ for all $t\geq \ol{r}$ and $P_M(t)$ is  increasing for all $t\geq \ol{r} - 1$. 
\item[(ii)] If $\dim M \geq 1$, then $H_{\ol{M}}(\ol{r}) \geq \deg(M)$.
\end{itemize}
\el

\begin{proof}
 (i) By the  Grothendieck-Serre formula,
\begin{equation}\label{EGS} H_{\ol{M}}(t) - P_{\ol{M}}(t) = \sum_{i=1}^d (-1)^i \ell (H^i_{\mfrak}({\ol{M}})_t).
\end{equation}
 This implies that $P_M(t) = P_{\ol{M}}(t) = H_{\ol{M}}(t)$ for all $t\geq \ol{r}$. Since $H_{\ol{M}}(t)$ is an increasing function, $P_{\ol{M}}(t)$ is also increasing for all $t\geq \ol{r}$. If $d\leq 1$, then $P_M(t)$ is a constant. Let $d\geq 2$ and $l$ be a generic linear form. Then
 $$P_M(\ol{r}) - P_M(\ol{r}-1) = P_{M/lM}(\ol{r}) = H_{\ol{M/lM}}(\ol{r}) \geq 0.$$

(ii) If $\dim M =1$, then by (i) $H_{\ol{M}}(\ol{r}) = P_M(\ol{M}) = \deg(M)$. If $d = \dim M \geq 1$, let $l_1,...,l_{d-1}$ be a generic linear forms. Since
$\reg (\ol{M}/(l_1,...,l_{d-1})\ol{M}) \leq \ol{r}$, the above remark implies that
$$
H_{\ol{M}}(\ol{r}) \geq H_{\ol{ \ol{M}/(l_1,...,l_{d-1})\ol{M} }}( \ol{r}) = P_{\ol{ \ol{M}/(l_1,...,l_{d-1})\ol{M} }}( \ol{r}) = \deg (M).
$$
\end{proof}

\bt \label{BoundExt1}
Let $M$ be a finitely generated graded $R$-module of dimension $d\geq 1$. Let $l_1,\ldots ,l_d$ be a filter regular sequence of linear forms on $M$ and $M_j:=M/(l_1,\ldots l_j)M$. Set $\ol{M_j}:=M_j/H^0_\im (M_j)$ and $\bar{r}_j:=\reg (\ol{M_j})$. Then for $i>0$, $\indeg (\ext^{n-i}_R(M,R))\geq -\bar{r}_{i-1}-n+1$ and
$$
\dim_k  \ext^{n-i}_R(M,R)_{\mu}\leq {{\mu +\bar{r}_{i-1}+n-1}\choose{i-1}}\Delta^{i-1}P_{M}(\bar{r}_{i}-1).
$$
\et

\begin{proof}  First notice that $\ext^i_R(M_j,R)=\ext^i_R(\ol{M_j},R)$ for $i\not= n$.

Set $N_{j+1}:=\ol{M_j}/l_{j+1}\ol{M_j}$. One has $\reg (N_{j+1})=\reg (\ol{M_j})=\bar{r}_j$ and
$$N_{j+1} \cong M_j/(l_{j+1}M_j + H^0_\im (M_j) ) \cong M_{j+1}/((l_{j+1}M_j + H^0_\im (M_j) )/ l_{j+1}M_j ).$$
Noticing that the module $U:= (l_{j+1}M_j + H^0_\im (M_j) )/ l_{j+1}M_j \cong H^0_\im (M_j)/(H^0_\im (M_j)\cap l_{j+1}M_j )$ is of finite length, we get that $U$ is a submodule of $H^0_\im (M_{j+1})$ and $H^0_\im (M_{j+1})/U \cong H^0_\im (N_{j+1})$. Hence
\begin{equation} \label{BExt1a}
\ol{M_{j+1}} \cong (M_{j+1}/U)/(H^0_\im (M_{j+1})/U) \cong N_{j+1}/H^0_\im (N_{j+1}),
\end{equation}
 which also shows that $\bar{r}_{j+1}\leq \bar{r}_{j}$.

Now we show  by induction on $i\geq 1$ that
$$\indeg(\ext^{n-i}_R(M_j,R)) \geq \bar{r}_{j+i-1}-n + i$$
and  that
$$ \dim_k  \ext^{n-i}_R(M_j,R)_{\mu} \leq {{\mu +\bar{r}_{j+i-1}+n-1}\choose{i-1}}P_{M_{j+i-1}}(\bar{r}_{j+i}-1)$$
for all $j\geq 0$.

Let $i=1$. In this case, by Lemma \ref{BoundExt3}(i),  $H_{\ol{M_j}}(\nu )=P_{\ol{M_j}}(\nu )=P_{M_j}(\nu )$ for $\nu \geq \reg (\ol{M_j})$. Recall that $P_{M_{j+1}}(\nu )=P_{M_j}(\nu )-P_{M_j}(\nu -1)$ for any $\nu$. 
The exact sequence 
$$ 0\ra  \ol{M_j} (-1)\ra \ol{M_j} \ra N_{j+1}\ra 0 $$
 induces, for $i<n$, an exact sequence 
\begin{equation} \label{BExt1a0}
\cdots \ra \ext^i_R(M_j,R)\ra  \ext^i_R(M_j,R)(1)\ra  \ext^{i+1}_R(N_{j+1},R), 
\end{equation}
which shows that, for $i<n-1$, 
\begin{equation} \label{BExt1aa}
\dim_k  \ext^i_R(M_j,R)_{\mu}\leq \sum_{\nu <\mu} \dim_k  \ext^{i+1}_R(N_{j+1},R)_{\nu}.
\end{equation}
and
\begin{eqnarray}
\dim_k  \ext^{n-1}_R(M_j,R)_{\mu}&\leq & \sum_{\nu <\mu} \dim_k  \ext^{n}_R(N_{j+1},R)_{\nu} \nonumber \\
&= &\sum_{\nu <\mu} \dim_k H^0_\im (N_{j+1})_{-\nu -n}\nonumber\\
&= & \sum_{\nu =-n-\mu+1}^{\ed (H^{0}_{\im}(N_{j+1}))}\dim_k  H^0_\im (N_{j+1})_{\nu}\nonumber\\
&\leq &  \sum_{\nu \leq \bar{r}_j} (H_{N_{j+1}}(\nu )-H_{\ol{M_{j+1}}}(\nu ))  \nonumber \\
&=& H_{\ol{M_j}}(\bar{r}_{j})-\sum_{\nu \leq \bar{r}_{j}} H_{\ol{M_{j+1}}}(\nu ) \nonumber \\
&\leq &  P_{M_j}(\bar{r}_{j})-\sum_{\bar{r}_{j+1}\leq \nu \leq \bar{r}_{j}} P_{M_{j+1}}(\nu ) \ {\rm (by \ Lemma \ \ref{BoundExt3}(i))} \label{BExt1b} \\
&=& P_{M_j}(\bar{r}_{j})-\sum_{\bar{r}_{j+1}\leq \nu \leq \bar{r}_{j}} (P_{M_{j}}(\nu )-P_{M_{j}}(\nu -1)) \nonumber\\
&=& P_{M_j}(\bar{r}_{j+1}-1). \label{BExt1c}
\end{eqnarray}
 By (1) in Theorem \ref{regExtMN},  $\indeg(\ext^{n-1}_R(M_j,R)) \geq  - \bar{r}_j - n +1$. This means $\ext^{n-1}_R(M_j,R)_\mu =0$ for all $\mu \leq - \bar{r}_j - n$. For $\mu \geq  - \bar{r}_j - n +1$, ${\mu +\bar{r}_j+n-1\choose 0}= 1$. Hence (\ref{BExt1c}) implies  the claim for $i=1$ and all $j$. 

Let $i\geq 2$. Notice that 
 $$
 \ext^{n-i+1}_R(N_{j+1},R)\simeq \ext^{n-i+1}_R(\ol{M_{j+1}},R)\simeq \ext^{n-i+1}_R(M_{j+1},R).
 $$
 Furthermore, $M_{j+1}$ is of dimension $d-j-1$ and $l_{j+2},\ldots ,l_d$ is a filter regular
 sequence on $M_{j+1}$. 
 One has
\begin{eqnarray}
\dim_k  \ext^{n-i}_R(M_j,R)_{\mu}&\leq & \sum_{\nu <\mu} \dim_k  \ext^{n-i+1}_R(N_{j+1},R)_{\nu}  \ \ {\rm (by\ (\ref{BExt1aa}) \ and \ (\ref{BExt1a}))} \nonumber \\
&=& \sum_{\nu <\mu} \dim_k  \ext^{n-(i-1)}_R(M_{j+1},R)_{\nu}\nonumber 
\end{eqnarray}
By induction hypothesis, $\indeg (\ext^{n-(i-1)}_R(M_{j+1},R))\geq -\bar{r}_{(j+1)+(i-2)}-n+i-1= - \bar{r}_{j+i-1}-n+i-1$ and
therefore
\begin{eqnarray}
\dim_k  \ext^{n-i}_R(M_j,R)_{\mu}&\leq &   \sum_{\nu =-\bar{r}_{j+i-1}-n + i-1}^{\mu -1}\dim_k  \ext^{n-(i-1)}_R(M_{j+1},R)_{\nu}\nonumber \\
&\leq& \sum_{\nu =-\bar{r}_{j+i-1}-n +i-1}^{\mu -1}
{{\nu +\bar{r}_{(j+1)+(i-2)}+n-1}\choose{i-2}}\Delta^{i-2}P_{M_{j+1}}(\bar{r}_{(j+1)+(i-1)}-1) \nonumber \\
&=& \sum_{\nu =i-2}^{\mu +\bar{r}_{j+i-1}+n-2}{{\nu }\choose{i-2}}P_{M_{j+i-1}}(\bar{r}_{j+i}-1)\nonumber \\
&=&{{\mu +\bar{r}_{j+i-1}+n-1}\choose{i-1}}P_{M_{j+i-1}}(\bar{r}_{j+i}-1).\nonumber
\end{eqnarray}
Finally, using (\ref{BExt1a0}) we get  epimorphisms
$$ \ext^{n-i}_R(M_j,R)_\mu \ra  \ext^{n-i}_R(M_j,R)_{\mu +1}\ra 0$$
for all $\mu < \bar{r}_{j+i-1}-n +i-1$. Since $\ext^{n-i}_R(M_j,R)_\mu =0$ for $\mu \ll 0$, this yields $\ext^{n-i}_R(M_j,R)_\mu =0$ for all $\mu \leq \bar{r}_{j+i-1}-n + i-1$. Hence $\indeg(\ext^{n-i}_R(M_j,R) \geq \bar{r}_{j+i-1}-n + i$, as required.
\end{proof}

In particular,
$$
\dim_k  \ext^{n-d}_R(M,R)_{\mu}\leq {{\mu +\bar{r}_{d-1}+n-1}\choose{d-1}}\deg M,
$$
$$
\begin{array}{rl}
\dim_k  \ext^{n-d+1}_R(M,R)_{\mu}&\leq {{\mu +\bar{r}_{d-2}+n-1}\choose{d-2}}P_{M_{d-2}}(\bar{r}_{d-1}-1)
\end{array}
$$
and the numbers $\bar{r}_{d-1}\leq r_d$ and $\bar{r}_{d-2}\leq r_{d-1}$ can be quite sharply
estimated from the degrees of generators and relations of $M$ by \cite[2.1]{CFN}.

For later use we also need a bound  in terms of the Hilbert function.

\bco \label{BoundExt2}
Keep the notation of Theorem \ref{BoundExt1}. Then for $i>0$, 
$$ \dim_k  \ext^{n-i}_R(M,R)_{\mu}\leq {{\mu +\bar{r}_{i-1} +n -1}\choose{i-1}}H_{\ol{M_{i-1}}}(\bar{r}_{i-1}) \leq {{\mu +\bar{r} +n -1}\choose{i-1}}H_{\ol{M}}(\bar{r}). $$
\eco

\begin{proof} The second inequality follows from the first one by using  the fact $\bar{r}_{i-1} \leq \bar{r}$.

To prove the first inequality,  first note from (\ref{BExt1b}) that $\dim_k  \ext^{n-1}_R(M_j,R)_{\mu} \leq P_{M_j}(\bar{r}_j)$. Using this inequality  instead of (\ref{BExt1c}) in the last induction step of the above Theorem we get
$$
\dim_k  \ext^{n-i}_R(M,R)_{\mu}\leq {{\mu +\bar{r}_{i-1}+n-1}\choose{i-1}}\Delta^{i-1}P_{M}(\bar{r}_{i-1}).
$$
 Further, note that  $\Delta^{i-1}P_{M}(t) = P_{M_{i-1}}(t)$. Since this polynomial
 is  increasing for all $t\geq \bar{r}_{i-1} $ 
and  $P_{M_{i-1}}(\bar{r}_{i-1}) = H_{\ol{M_{i-1}}}(\bar{r}_{i-1})$ (by Lemma \ref{BoundExt3}(i)), the claim follows from  the above inequality.

\end{proof}

\begin{Lemma}  \label{Hilb1} Assume that $M$ is  a finitely generated graded $R$-module of dimension $d\geq 1$ and  $\indeg M =0$. Let $l_1,\ldots ,l_d$ be a filter regular sequence of linear forms on $M$ and $B= \dim_k(M/(l_1,...,l_d)M)$. Then 
\begin{itemize}
\item[(i)] $H_M(\mu ) \leq B{\mu +d-1 \choose d-1}$,
\item[(ii)] $H_M(\mu ) \leq \mu (M){\mu +n-1\choose n-1}$.
\end{itemize}
\end{Lemma}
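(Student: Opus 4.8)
The plan is to handle the two inequalities by quite different means: (ii) follows at once from a free presentation, whereas (i) is an induction on $d$ built on the multiplication-by-$l_1$ sequence. For (ii), I would fix a minimal system of generators of $M$; since $\indeg M=0$, each generator sits in a degree $a_s\geq 0$, so there is a graded surjection $\bigoplus_{s=1}^{\mu(M)}R(-a_s)\twoheadrightarrow M$. Comparing Hilbert functions and using that $\binom{\mu-a_s+n-1}{n-1}\leq\binom{\mu+n-1}{n-1}$ for every $a_s\geq 0$ (with the convention $\binom{a}{b}=0$ for $a<b$) yields $H_M(\mu)\leq\sum_{s}\binom{\mu-a_s+n-1}{n-1}\leq\mu(M)\binom{\mu+n-1}{n-1}$.

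For (i), I would argue by induction on $d$, the engine being the right-exact sequence $M(-1)\xrightarrow{l_1}M\to N_1\to 0$ with $N_1:=M/l_1M$. Taking the degree-$\mu$ part gives $H_M(\mu)-H_M(\mu-1)\leq H_{N_1}(\mu)$ for all $\mu$, the defect being the dimension of the kernel of multiplication by $l_1$, which is nonnegative. As $\indeg M=0$ forces $H_M(\mu)=0$ for $\mu<0$, summing from $0$ to $\mu$ gives $H_M(\mu)\leq\sum_{\nu=0}^{\mu}H_{N_1}(\nu)$. In the base case $d=1$, filter regularity of $l_1$ makes $N_1$ of dimension $0$, hence of finite length $B$, and since $N_1$ lives in degrees $\geq 0$ we get $\sum_{\nu=0}^{\mu}H_{N_1}(\nu)\leq\dim_k N_1=B=B\binom{\mu}{0}$. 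For $d\geq 2$, I would observe that $N_1$ is again a module with $\indeg N_1=0$, of dimension $d-1$, carrying the filter regular sequence $l_2,\dots,l_d$ with $N_1/(l_2,\dots,l_d)N_1=M/(l_1,\dots,l_d)M$ of the same dimension $B$; the induction hypothesis then gives $H_{N_1}(\nu)\leq B\binom{\nu+d-2}{d-2}$, and the hockey-stick identity $\sum_{\nu=0}^{\mu}\binom{\nu+d-2}{d-2}=\binom{\mu+d-1}{d-1}$ closes the induction.

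The routine points I must not overlook are that the cokernel inequality points the right way precisely because the kernel of multiplication by $l_1$ contributes a nonnegative term, and that a filter regular linear form drops the dimension by exactly one while preserving $\indeg=0$ and the terminal length $B$. The only genuinely delicate bookkeeping, and the step I expect to need the most care, is verifying that passing from $M$ to $N_1$ keeps every hypothesis of the statement intact, so that the induction hypothesis truly applies to $N_1$; once that is in place the remaining summation is purely combinatorial.
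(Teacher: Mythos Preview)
Your proof is correct and follows essentially the same approach as the paper's. For (ii) both arguments use the surjection from $\bigoplus_{s=1}^{\mu(M)}R(-a_s)$ with $a_s\geq 0$, and for (i) both induct on $d$ via the inequality $H_M(\mu)\leq H_M(\mu-1)+H_{M/lM}(\mu)$ coming from multiplication by a filter regular linear form, then sum and apply the hockey-stick identity; the only cosmetic difference is that the paper writes out the four-term exact sequence $0\to(0:l)_{\mu-1}\to M_{\mu-1}\to M_\mu\to(M/lM)_\mu\to 0$ explicitly rather than phrasing it as right-exactness plus a nonnegative kernel term.
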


\begin{proof} (i). We do induction on $d$. Let $d=1$. From the exact sequence
$$0 \rightarrow  (0:l_1)_{\mu -1} \rightarrow M_{\mu -1} \rightarrow  M_\mu  \rightarrow 
(M/l_1M)_\mu  \rightarrow 0,$$
and $M_{-1} =0$, we get
$$\begin{array}{ll}
\dim_k(M_\mu ) &\leq \dim_k(M_{\mu -1}) + \dim_k((M/l_1M)_\mu )\\
& \leq \cdots \leq \sum_{j=0}^\mu  \dim_k((M/l_1M)_j)  \leq \dim_k(M/l_1M) = B.
\end{array}$$
Let $d\geq 2$. As above
$$\dim_k(M_\mu ) \leq  \sum_{j=0}^\mu  \dim_k((M/l_dM)_j)  .$$
An application of   the induction hypothesis yields
$$\dim_k(M_\mu ) \leq  \dim_k(M/(l_1,...,l_d)M) \sum_{j=0}^s {j +d-2 \choose d-2} = B
{\mu +d-1 \choose d-1} .$$

(ii). This is clear if we present $M$ as a factor module of the free module $\oplus_{j=1}^{\mu (M)} R(-a_j)$, where $a_j \geq 0$ is an integer for all $j$.

\end{proof}

The following bounds do not depend on the Hilbert function of $M$. It is an extension of  \cite[Theorem 3.4]{H} to the case of modules. Note that our proof here is completely different from that in \cite{H}.

\begin{Theorem} \label{Hilb2} Assume that $M$ is  a finitely generated graded $R$-module of dimension $d\geq 1$ and  $\indeg M =0$. Let $l_1,\ldots ,l_d$ be a filter regular sequence of linear forms on $M$, set $M_i :=M/(l_1,...,l_i)M$, $B:= \dim_k(M_d)$ and $\bar{r}_i = \reg(\ol{M_i})$. Then for all $0<i \leq n$ we have
\begin{itemize}
\item[(i)] $\dim_k \Ext^{n-i}_R(M,R)_\mu  \le B {\ol{r}_{i-1}+d-i  \choose d-i}{\mu +\ol{r}_{i-1}+n -1 \choose i-1}$,
\item[(ii)] $\dim_k H^i_{\mfrak}(M)_\mu \le B{\ol{r}_{i-1}+d-i \choose d-i}{-\mu +\ol{r}_{i-1} -1 \choose i-1}$.
\end{itemize}
\end{Theorem}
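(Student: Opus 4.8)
The plan is to read off (i) from Corollary \ref{BoundExt2} after bounding the single number $H_{\ol{M_{i-1}}}(\bar r_{i-1})$ that occurs there, and then to deduce (ii) from (i) by graded local duality.

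First I would dispose of the range $i>d$: there $\ext^{n-i}_R(M,R)=0$ since $\ext^j_R(M,R)=0$ for $j<n-d$ (Remark \ref{noethnorm}), and likewise $H^i_\im(M)=0$, so both inequalities hold trivially, the right-hand sides being nonnegative. Thus I may assume $1\le i\le d$, in which case $M_{i-1}=M/(l_1,\ldots,l_{i-1})M$ has dimension $d-i+1\ge 1$ and, since $\indeg M=0$ forces $(M_{i-1})_0=M_0\ne 0$, also $\indeg M_{i-1}=0$.

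For (i), Corollary \ref{BoundExt2} gives
$$\dim_k \ext^{n-i}_R(M,R)_\mu\le \binom{\mu+\bar r_{i-1}+n-1}{i-1}\,H_{\ol{M_{i-1}}}(\bar r_{i-1}),$$
so it suffices to show $H_{\ol{M_{i-1}}}(\bar r_{i-1})\le B\binom{\bar r_{i-1}+d-i}{d-i}$. Since $\ol{M_{i-1}}=M_{i-1}/H^0_\im(M_{i-1})$ is a quotient of $M_{i-1}$, one has $H_{\ol{M_{i-1}}}(\bar r_{i-1})\le H_{M_{i-1}}(\bar r_{i-1})$. Now $l_i,\ldots,l_d$ is a filter regular sequence of length $d-i+1=\dim M_{i-1}$ on $M_{i-1}$, and the crucial bookkeeping is the identification
$$M_{i-1}/(l_i,\ldots,l_d)M_{i-1}=M/(l_1,\ldots,l_d)M=M_d,$$
so that the constant ``$B$'' attached to $M_{i-1}$ by Lemma \ref{Hilb1} is exactly $\dim_k M_d=B$. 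Lemma \ref{Hilb1}(i) applied to $M_{i-1}$ (of dimension $d-i+1$ and initial degree $0$) then yields $H_{M_{i-1}}(\bar r_{i-1})\le B\binom{\bar r_{i-1}+d-i}{d-i}$, and combining the three estimates proves (i).

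For (ii), I would invoke graded local duality in the form $\dim_k H^i_\im(M)_\mu=\dim_k \ext^{n-i}_R(M,R)_{-\mu-n}$, and substitute $-\mu-n$ for $\mu$ in (i): the factor $\binom{\mu+\bar r_{i-1}+n-1}{i-1}$ becomes $\binom{-\mu+\bar r_{i-1}-1}{i-1}$, which is precisely the claimed bound. The computation is routine; the only points requiring care are the identification $M_{i-1}/(l_i,\ldots,l_d)M_{i-1}=M_d$, which pins down the single constant $B$ uniformly in $i$, and the degree shift in the duality isomorphism, while the convention $\binom{a}{b}=0$ for $a<b$ automatically absorbs the degrees in which the modules vanish.
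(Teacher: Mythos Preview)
Your argument is correct and follows essentially the same path as the paper: bound $H_{\ol{M_{i-1}}}(\bar r_{i-1})$ via Lemma \ref{Hilb1}, feed this into Corollary \ref{BoundExt2} to get (i), and then read off (ii) by graded local duality with the shift $\mu\mapsto -\mu-n$. The only cosmetic difference is that the paper applies Lemma \ref{Hilb1} directly to $\ol{M_{i-1}}$, whereas you first pass to $M_{i-1}$ via $H_{\ol{M_{i-1}}}\le H_{M_{i-1}}$ and then apply the lemma there; your route is in fact slightly cleaner, since the identification $M_{i-1}/(l_i,\ldots,l_d)M_{i-1}=M_d$ is immediate, while for $\ol{M_{i-1}}$ one must still observe that its quotient by $(l_i,\ldots,l_d)$ is a quotient of $M_d$.
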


\begin{proof} The second statement follows from the first one and the isomorphism $ \Hom(H^i_{\mfrak}(M), k) \cong \Ext^{n-i}_R(M,R)(-n)$. To prove the first statement, applying Lemma \ref{Hilb1} to $\ol{M_{i-1}}$ we get
$$H_{\ol{M_{i-1}}} (\ol{r}_{i-1} )\le B {\ol{r}_{i-1}+d-i  \choose d-i}.$$
The result then follows from Corollary \ref{BoundExt2}.
\end{proof}

Write the Hilbert polynomial of $M$ in the  form:
$$P_{M}(t)= e_0(M){t+d-1 \choose d-1} -e_1(M) {t+d-2 \choose d-2}+ \cdots + (-1)^{d-1}e_{d-1}(M).$$
Then $e_0(M), e_1(M),...,e_{d-1}(M)$ are called  {\it Hilbert coefficients}
of $M$. Note that $e_0(M)=\deg(M)$. Applying the above estimates we can bound the Hilbert coefficients in terms of  the Castelnuovo-Mumford regularity of $M$. The following result extends Theorem 4.1 and Theorem 4.6 in \cite{H}. Moreover the bound here is also a little bit better.

\begin{Theorem} \label{Hilb3} Assume that $M$ is  a finitely generated graded $R$-module of dimension $d\geq 1$ and  $\indeg M =0$. Let $l_1,\ldots ,l_d$ be a filter regular sequence of linear forms on $M$ and $B= \dim_k(M/(l_1,...,l_d)M)$. Then for all $0\le i \le d-1$ we have
$$|e_i(M)| \leq B\cdot(\reg(\bar{M}) + 1)^i.$$
\end{Theorem}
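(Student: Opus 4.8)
The plan is to isolate each Hilbert coefficient as the value of a difference polynomial at $-1$, and then to bound that value by playing the Hilbert function of $M$ against the ``complementary'' function $B\binom{t+d-1}{d-1}$. The sharp exponent will come from cancellation, not from a triangle inequality, and pinning that down is the real work.

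First I would record the extraction formula. Writing $\Delta P(t)=P(t)-P(t-1)$ and using that $\Delta^{j}P_M=P_{M_j}$ for the filter regular sequence, the expansion of $P_M$ in the basis $\binom{t+d-1-j}{d-1-j}$ gives
$$e_i(M)=(-1)^i P_{M_{d-1-i}}(-1),$$
because $\Delta^{d-1-i}\binom{t+d-1-j}{d-1-j}=\binom{t+i-j}{i-j}$ vanishes at $t=-1$ for $j<i$ and equals $1$ for $j=i$. Since $P_{M_{d-1-i}}$ has degree $i$ and agrees with $H_{\ol{M_{d-1-i}}}$ in degrees $\ge\bar{r}$ (Lemma \ref{BoundExt3}(i)), Newton's backward interpolation at $\bar{r}$, together with $\Delta^k P_{M_{d-1-i}}=P_{M_{d-1-i+k}}$, yields
$$P_{M_{d-1-i}}(-1)=\sum_{k=0}^{i}(-1)^k\binom{\bar{r}+1}{k}\,v_k,\qquad v_k:=H_{\ol{M_{d-1-i+k}}}(\bar{r}).$$
Each $\ol{M_{d-1-i+k}}$ has dimension $i+1-k$ and an Artinian reduction of length at most $B$, so Lemma \ref{Hilb1}(i) supplies the two-sided control $0\le v_k\le B\binom{\bar{r}+i-k}{i-k}$.

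The delicate step is the estimate of this alternating sum: the triangle inequality is too weak, losing a constant factor already for $d=2$, so the cancellation among the terms must be exploited. To do this I would set $\phi_k:=B\binom{\bar{r}+i-k}{i-k}-v_k\ge 0$ and use the Vandermonde-type identity $\sum_{k=0}^{i}(-1)^k\binom{\bar{r}+1}{k}\binom{\bar{r}+i-k}{i-k}=0$ for $i\ge 1$ (immediate from $(1-x)^{\bar{r}+1}(1-x)^{-\bar{r}-1}=1$), which produces the second representation $P_{M_{d-1-i}}(-1)=-\sum_{k}(-1)^k\binom{\bar{r}+1}{k}\phi_k$. Splitting each representation into even- and odd-indexed parts (all four partial sums being nonnegative) and discarding the unfavourable part in each gives
$$\bigl|P_{M_{d-1-i}}(-1)\bigr|\le\max\Bigl\{\sum_{k\ \mathrm{even}}\binom{\bar{r}+1}{k}v_k,\ \sum_{k\ \mathrm{even}}\binom{\bar{r}+1}{k}\phi_k\Bigr\}\le B\sum_{k\ \mathrm{even}}\binom{\bar{r}+1}{k}\binom{\bar{r}+i-k}{i-k}.$$

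It then remains to verify the combinatorial inequality $\sum_{k\ \mathrm{even}}\binom{\bar{r}+1}{k}\binom{\bar{r}+i-k}{i-k}\le(\bar{r}+1)^i$. Setting $F(x)=\bigl(\tfrac{1+x}{1-x}\bigr)^{\bar{r}+1}$ and $T_i=[x^i]F$, the left-hand side is $\tfrac12 T_i$, and the relation $(1-x^2)F'=2(\bar{r}+1)F$ translates into $(i+1)T_{i+1}=2(\bar{r}+1)T_i+(i-1)T_{i-1}$; a one-line induction (the step reducing to $(i-1)\bigl((\bar{r}+1)^2-1\bigr)\ge 0$) gives $T_i\le 2(\bar{r}+1)^i$ for $i\ge 1$, hence the claim, and the case $i=0$ is simply $e_0(M)=\deg(M)\le B$. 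I expect this last combinatorial estimate, together with the realization that it is cancellation rather than the triangle inequality that yields the sharp power $(\bar{r}+1)^i$, to be the crux of the argument; the remaining ingredients are only bookkeeping with the reduction modules $M_j$ and the two lemmas already in hand.
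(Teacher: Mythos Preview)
Your argument is correct and takes a genuinely different route from the paper's proof. The paper proceeds by induction on $d$, reducing via $e_i(M)=e_i(M_1)$ to the top coefficient $e_{d-1}$, and then evaluates the Grothendieck--Serre formula at $t=-1$ to write $(-1)^{d-1}e_{d-1}(M)=C_d-D_d$ as an alternating sum of local cohomology dimensions $\dim_k H^i_{\im}(M)_{-1}$. These are bounded via Theorem~\ref{Hilb2}, which in turn rests on the Ext estimates of Theorem~\ref{BoundExt1} and Corollary~\ref{BoundExt2}. The combinatorial endgame in the paper is the inequality $\tilde C_d=\sum_{2j+1\le d}\binom{\bar r}{2j}\binom{\bar r+d-2j-1}{d-2j-1}\le(\bar r+1)^{d-1}$, proved by a case split on the parity of $d$.

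Your approach bypasses the local cohomology machinery entirely: you read off $e_i(M)=(-1)^iP_{M_{d-1-i}}(-1)$ directly, expand $P_{M_{d-1-i}}(-1)$ by Newton's backward formula at $\bar r$, and control the nodes $v_k=H_{\overline{M_{d-1-i+k}}}(\bar r)$ using only Lemma~\ref{Hilb1}(i) and Lemma~\ref{BoundExt3}(i). The Vandermonde cancellation trick (introducing $\phi_k$ and using the two representations) is the analogue of the paper's splitting into $C_d$ and $D_d$, and your combinatorial inequality $\tfrac12 T_i\le(\bar r+1)^i$ via the ODE $(1-x^2)F'=2(\bar r+1)F$ is cleaner than the paper's parity argument, though the sums differ only by $\binom{\bar r+1}{k}$ versus $\binom{\bar r}{k}$. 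What your approach buys is self-containment: it needs nothing from Sections~\ref{A}--\ref{A2} and treats all $e_i$ uniformly without induction on $d$. What the paper's approach buys is that it exhibits the bound as arising from the local cohomology of $M$ itself, tying Theorem~\ref{Hilb3} to the rest of the paper's program. One small point worth making explicit in your write-up: evaluating $P_{M_j}$ at $\bar r$ (rather than at $\bar r_j$) is legitimate because $\bar r_j\le\bar r$, which is established in the proof of Theorem~\ref{BoundExt1}.
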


\begin{proof} As usual we set $\bar{r} = \reg(\bar{M})$. We do induction on $d$. Note that $0 \le e_0(M) \le B$. Hence the inequality holds true for $i=0$. In particular the statement holds for $d=1$. Assume that the statement holds for all modules of dimension $d-1\geq 1$. Let $M$ be a module of dimension $d$ and $M_1 = M/l_1M$. Then $e_i(M) = e_i(M_1)$  for all $i\le d-2$. Since $\reg(\ol{M_1})\le \reg(\bar{M})$ and $\dim_k(M_1/(l_2,...,l_d)M_1) = B$, by the induction hypothesis it suffices to show the inequality
$$|e_{d-1}(M)| \leq B(\bar{r}+1)^{d-1}.$$
Note that we may assume $M = \bar{M}$, i.e. $H^0_{\mfrak}(M)=0$. From the
Grothendieck-Serre  formula (\ref{EGS})
we get (setting $t =-1$):
$$(-1)^{d-1}e_{d-1}(M) = C_d - D_d,$$
where
$$C_d= \dim_k(H^1_{\mfrak}(M)_{-1} )+ \dim_k(H^3_{\mfrak}(M)_{-1} ) \ \cdots ,$$
and
$$D_d= \dim_k(H^2_{\mfrak}(M)_{-1})+ \dim_k(H^4_{\mfrak}(M)_{-1} ) \ \cdots .$$
By Theorem \ref{Hilb2}(ii) we have
$$C_d \leq B \displaystyle{\sum_{1\leq 2j + 1\leq d} {\bar{r}\choose 2j} {\bar{r} + d -2j - 1 \choose d-2j -1}} =: B.\tilde{C}_d.$$
We show by induction on $d$ that $\tilde{C}_d \le (\bar{r}+1)^{d-1}$.  We have $\tilde{C}_2 = \bar{r} +1$ and $\tilde{C}_3 = \bar{r}^2 + \bar{r} +1 < (\bar{r} +1)^2$. Let $d\ge 4$. Assume that
$$\tilde{C}_{d-1} \leq (\bar{r}+1)^{d-2}.$$
If $d$ is even, then $d-2j-1 \geq 1$ and
$${\bar{r} + d -2j - 1 \choose d-2j -1}  = \frac{\bar{r} + d -2j - 1}{d-2j -1}{\bar{r} + (d-1) -2j - 1 \choose (d-1)-2j -1}\leq (\bar{r}+1){\bar{r} + (d-1) -2j - 1 \choose (d-1)-2j -1}.$$
Hence, by the induction hypothesis on $\tilde{C}_{d-1}$ we get
$$\tilde{C}_d \leq (\bar{r}+1) \displaystyle{\sum_{1\leq 2j + 1\leq d} {\bar{r} \choose 2j} {\bar{r} + (d-1) -2j - 1 \choose (d-1)-2j -1}} = (\bar{r} +1) \tilde{C}_{d-1} \leq (\bar{r} +1)^{d-1}.$$
If $d$ is odd, say $d= 2\delta +1$, then for $j< \delta $ we have $d-2j -1 \geq 2$ and
$${\bar{r} + d -2j - 1 \choose d-2j -1}  =( \frac{\bar{r} }{d-2j -1} +1){\bar{r} + (d-1) -2j - 1 \choose (d-1)-2j -1}\leq (\frac{\bar{r}}{2}+1)  {\bar{r} + (d-1) -2j - 1 \choose (d-1)-2j -1}.$$
Therefore
$$\begin{array}{ll}
\tilde{C}_d &\le (\frac{\bar{r}}{2}+1)  \displaystyle{\sum_{1\leq 2j + 1\leq d-1} {\bar{r}\choose 2j} {\bar{r} + (d-1) -2j - 1 \choose (d-1)-2j -1}} + {\bar{r} \choose d-1}\\
& < \displaystyle{(\frac{\bar{r}}{2}+1) \tilde{C}_{d-1} + \frac{(\bar{r}+1)^{d-2}\bar{r}}{2}} \\
& \leq \displaystyle{(\frac{\bar{r}}{2} +1)(\bar{r}+1)^{d-2}} + (\bar{r}+1)^{d-2}\frac{\bar{r}}{2}\\
 & = (\bar{r}+1)^{d-1}.
\end{array}$$
Thus we have proved $\tilde{C}_d \le  (\bar{r}+1)^{d-1}$, and so  $C_d \le B (\bar{r}+1)^{d-1}$. Similarly, $D_d \le B (\bar{r}+1)^{d-1}$. Hence
$$|e_{d-1}(M) | \le \max\{C_d,\ D_d\}\le B (\bar{r}+1)^{d-1},$$ 
as required.
\end{proof}

\begin{Remark}\label{Hilb4} (i) If $M$ is a Cohen-Macaulay module, then $B = \deg (M)$. In this case the bound of Theorem \ref{Hilb3} is related to the bound given in \cite[Lemma 11]{HHy1}. If $M= R/I$, where $I$ is a homogeneous ideal generated by forms of degrees at most $\Delta $, then  
$$B\le \max\{\Delta^{n-d},\  \adeg(M)^{n-d}\},$$ 
where $\adeg(M)$ is the so-called the arithmetic degree of $M$, see the proof of \cite[Theorem 3.4]{H}. 

(ii) Considering $M/(l_1,...,l_d)M$ as a module over $R/(l_1,...,l_d)R$, by Lemma \ref{Hilb1}(ii), we have 
$$B \leq \mu(M) {\bar{r} + n-d \choose n-d}.$$

(iii) Example 4.9 in \cite{H} shows that the bound on Hilbert coefficients given in the above theorem is rather good.
\end{Remark}

\section{A bound for the homological degree}\smallskip
\label{C}

The homological degree of a finite graded $R$-module $M$ was introduced by
Vasconcelos. It is defined recursively on the dimension as follows:

\begin{Definition}\label{hdeg} {\rm  \cite[Definition 9.4.1]{Va}
The homological degree of $M$ is the number 
\begin{equation}\label{E-hdeg}
\hdeg (M)  = \deg (M) + \sum_{i=0}^{d-1}{d-1 \choose i} \hdeg(\Ext_R^{n+i+1-d}(M,R)).
\end{equation}}
\end{Definition}

Note that

(a) $\hdeg (M) \geq  \deg(M) $, and the equality holds if and only if $M$ is a
Cohen-Macaulay module.

(b) $\hdeg (M) = \hdeg(M/H^0_{\mfrak}(M)) + \dim_k (H^0_{\mfrak}(M))$.

Let $\gen (M)$ denote the maximal degree of elements in a minimal set of
homogeneous generators of $M$. 
It turns out that the homological degree gives an upper bound for the
Castelnuovo-Mumford regularity
$$\reg (M )\leq \gen (M) + \hdeg (M) -1.$$
This result was first proved for rings by Doering,  Gunston and Vasconcelos
(\cite[Theorem 2.4]{DGV}). Later on it was extended to modules by Nagel (\cite[Theorem
3.1]{Na}). It was also shown in \cite{HHy} that one can use $\hdeg(M)$ to bound the 
Castelnuovo-Mumford regularity of Ext modules. In Chapter 9 of  the book \cite{Va} one can 
find some interesting applications of this invariant. Therefore Vasconcelos asked the following 
question (see the last two lines on page 261 of \cite{Va}): 

Is the homological degree bounded by a polynomial  function of the Castelnuovo-Mumford 
regularity?

The following result gives a positive answer to this question.

\begin{Theorem} \label{B-hdeg1} Let $M$ be a non-zero finitely generated graded 
$R$-module of dimension $d>0$. Then
$$\hdeg(M)\leq \left[\mu(M) {\reg (M) - \bg(M) + n\choose n}\right]^{ 2^{(d-1)^2}} .$$
\end{Theorem}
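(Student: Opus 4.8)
The plan is to prove Theorem \ref{B-hdeg1} by induction on the dimension $d$, exploiting the recursive definition of $\hdeg$ in \eqref{E-hdeg}. The base case $d=1$ is essentially the statement that the only non-trivial summand, $\hdeg(\Ext^n_R(M,R))$, equals $\dim_k H^0_\im(M)$, which by Remark \ref{regexttrunc}(v) and Lemma \ref{dimTor}-type counting is controlled by the relevant binomial. For the inductive step, I would first reduce to the case $H^0_\im(M)=0$: by observation (b) following Definition \ref{hdeg}, $\hdeg(M)=\hdeg(\bar M)+\dim_k H^0_\im(M)$, and passing to $\bar M=M/H^0_\im(M)$ does not increase $\mu(M)$, $\reg(M)$ nor alter the relevant degree data in a harmful way, so it suffices to bound each $\hdeg(\Ext^{n+i+1-d}_R(M,R))$ for $0\le i\le d-1$.

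The core of the argument is to apply the induction hypothesis to the modules $E_i:=\Ext^{n+i+1-d}_R(M,R)$. Each such $E_i$ has dimension $\le i$ (this is standard local-duality dimension counting: $\Ext^{n-j}_R(M,R)$ is supported in dimension $\le j$), so in particular $\dim E_i\le d-1$ and the induction applies. To use it I must feed the induction the three invariants it consumes: $\mu(E_i)$, $\reg(E_i)$ and $\bg(E_i)=\indeg(E_i)$. For $\reg(E_i)$ I would invoke Theorem \ref{regExtMN}(2) or, more sharply, Theorem \ref{newregextMR}, to bound $\reg(\Ext^{n+i+1-d}_R(M,R))$ polynomially (with exponent a power of $2$) in $\reg(M)$ and the Betti-number data $T^i=\dim_k\tor^R_i(M,k)$; the latter are bounded by Lemma \ref{dimTor} in terms of $\mu(M)$, $\reg(M)-\indeg(M)$ and $n$, i.e. precisely by the quantity $\mu(M)\binom{\reg(M)-\bg(M)+n}{n}$ appearing in the statement. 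For $\mu(E_i)$ I would use $\mu(E_i)=\dim_k\tor^R_0(E_i,k)$, bounded via Theorem \ref{regExtMN}(4) with $j=0$ evaluated in the relevant degree range determined by $\indeg(E_i)$ and $\reg(E_i)$, and for $\indeg(E_i)$ the estimate in Theorem \ref{regExtMN}(1).

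Assembling these gives, for each $i$, a bound of the shape $\hdeg(E_i)\le \bigl[\mu(E_i)\binom{\reg(E_i)-\bg(E_i)+n}{n}\bigr]^{2^{(\dim E_i-1)^2}}$, and substituting the polynomial-in-$\reg(M)$ bounds for $\mu(E_i)$, $\reg(E_i)$, $\bg(E_i)$ yields a bound for each summand that is again of the form $[\mu(M)\binom{\reg(M)-\bg(M)+n}{n}]^{2^{e_i}}$ for some exponent $e_i$. The final bookkeeping step is to check that summing over $0\le i\le d-1$ with the binomial weights $\binom{d-1}{i}$, together with the $\deg(M)$ term, produces a total whose exponent does not exceed $2^{(d-1)^2}$: here one uses that $\dim E_i\le i\le d-1$ so each inner exponent is at most $2^{(d-2)^2}$, and the outer composition of the regularity bound (which carries a factor $2^{n-2}$ from Theorem \ref{regExtMN}, but in the reduced Noether-normalized situation of Remark \ref{noethnorm} becomes $2^{d-2}$) multiplies exponents additively in the right way. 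I expect this exponent accounting to be the main obstacle: one must verify that the worst summand, $i=d-1$, contributes exponent at most $(d-1)^2$ after combining the $2^{(d-2)^2}$ from induction with the regularity-composition factor, and that the sum of terms of smaller exponent is absorbed into a single clean power $2^{(d-1)^2}$. The degree/number-of-terms overhead (the $\binom{d-1}{i}$ and the count $d$ of summands) must be shown to be swallowed by the base $\mu(M)\binom{\reg(M)-\bg(M)+n}{n}\ge 2$, so that multiplying by these polynomial factors only nudges the exponent, which is the delicate part of making the clean exponent $2^{(d-1)^2}$ hold exactly rather than with an extra additive constant.
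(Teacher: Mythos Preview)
Your strategy differs from the paper's in a significant way. The paper does \emph{not} induct directly on the statement of Theorem~\ref{B-hdeg1}. Instead it first proves the sharper Theorem~\ref{B-hdeg2}, namely
\[
\hdeg(M)\le \dim_k H^0_\im(M)+\bigl(P_M(\bar r)\bigr)^{2^{(d-1)^2}},
\]
by induction on $d$, and only afterwards deduces Theorem~\ref{B-hdeg1} from it via Lemma~\ref{B-deg} and Lemma~\ref{Hilb1} (plus a short case split $\delta=0$ versus $\delta>0$). The quantity carried through the induction is $H:=H_{\bar M}(\bar r)=P_M(\bar r)$, not $\mu(M)\binom{\reg(M)-\indeg(M)+n}{n}$. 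This matters because for the inductive step one needs the analogous invariant of $E_i=\Ext^{n-i}_R(M,R)$, and the paper bounds $H_{\overline{E_i}}(\reg \overline{E_i})$ \emph{directly} by Corollary~\ref{BoundExt2} together with the regularity bound of Theorem~\ref{newregextMR}, obtaining $H_{\overline{E_i}}(\reg \overline{E_i})<H^{2^{2d-3}-2}$. In addition, the dimension-$\le 1$ module $\Ext^{n-1}_R(M,R)$ is treated separately with the much sharper Lemma~\ref{C-hdeg}(ii), giving $\hdeg(E_1)\le (H-\deg M)H$; this is what makes the final sum close.

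The obstacle you flagged is real for your route. If you feed the induction the triple $(\mu(E_i),\reg(E_i),\indeg(E_i))$ and bound each piece via Theorem~\ref{regExtMN} and Theorem~\ref{newregextMR}, then already $\reg(E_i)-\indeg(E_i)$ is of order $Q^{2^{d-2}}$ with $Q=\mu(M)\binom{\delta+n}{n}$, so $\mu(E_i)\binom{\reg(E_i)-\indeg(E_i)+n}{n}$ is of order $Q^{c\cdot 2^{d-2}}$ for some $c$ that grows with $d$ (coming from the binomial and from summing $H_{E_i}$ over the degree range to get $\mu(E_i)$). After applying the inductive exponent $2^{(d-2)^2}$ one needs $c\cdot 2^{(d-2)^2+d-2}\le 2^{(d-1)^2}$, i.e.\ $c\le 2^{d-1}$, which is not automatic and becomes tight for the low-dimensional Ext modules (e.g.\ $\dim E_1\le 1$). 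Your sketch does not show this closes, and without a replacement for the paper's direct Hilbert-function bound (Corollary~\ref{BoundExt2}) and the separate sharp estimate for $E_1$, it is not clear it can. The paper's detour through $P_M(\bar r)$ is precisely what makes the exponent arithmetic land on $2^{(d-1)^2}$.
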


 In order to prove this theorem we need some auxiliary results.

\begin{Lemma} \label{B-deg} (i)
$\deg(M) +\dim_kH^0_{\mfrak}(M) \leq \sum_{\mu = \bg(M)}^{\reg (M)} H_M(\mu ) .$

(ii) $\sum_{\mu = \bg(M)}^{\reg (M)} H_M(\mu ) \leq 
\mu (M) {\reg (M) - \bg(M) + n\choose n}.$
\end{Lemma}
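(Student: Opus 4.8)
Let me prove the two stated inequalities.

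\emph{Proof of (ii).} This is the elementary part and I would dispose of it first. Presenting $M$ as a quotient of a free module $\oplus_{j=1}^{\mu(M)} R(-a_j)$ with each $a_j \geq \bg(M)$ (since the generators sit in degrees at most $\gen(M)$ and at least $\bg(M)=\indeg(M)$), one gets $H_M(\mu) \leq \mu(M)\binom{\mu - \bg(M) + n - 1}{n-1}$ for every $\mu$; this is exactly the estimate of Lemma \ref{Hilb1}(ii) after the shift normalizing $\bg(M)$ to $0$. Summing over $\mu$ from $\bg(M)$ to $\reg(M)$ and applying the hockey-stick identity $\sum_{\nu=0}^{s}\binom{\nu+n-1}{n-1}=\binom{s+n}{n}$ with $s=\reg(M)-\bg(M)$ yields the claimed bound
$$
\sum_{\mu=\bg(M)}^{\reg(M)} H_M(\mu) \leq \mu(M)\binom{\reg(M)-\bg(M)+n}{n}.
$$

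\emph{Proof of (i).} The plan is to read the left-hand side off the Grothendieck--Serre formula (\ref{EGS}) and control it by the Hilbert function on the relevant range. Writing $\ol{M}=M/H^0_\mfrak(M)$, one has $\dim_k H^0_\mfrak(M) = \sum_\mu \dim_k H^0_\mfrak(M)_\mu$ and $H_M(\mu)=H_{\ol M}(\mu)+\dim_k H^0_\mfrak(M)_\mu$, so it is natural to split the sum accordingly. By Lemma \ref{BoundExt3}(i) we have $\deg(M)=\deg(\ol M)=H_{\ol M}(\mu)-P_{\ol M}(\mu)+P_{\ol M}(\mu)$ stabilizing to $P_M$ for $\mu\geq\ol{r}$, and more to the point $\deg(M)\leq H_{\ol M}(\ol r)\leq H_{\ol M}(\reg M)$ since $\ol r=\reg(\ol M)\leq\reg(M)$ and $H_{\ol M}$ is nondecreasing past $\ol r$ by Lemma \ref{BoundExt3}(i)--(ii). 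Thus a single well-chosen term $H_{\ol M}(\reg M)$ already dominates $\deg(M)$.

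The remaining task is to absorb $\dim_k H^0_\mfrak(M)$ together with $\deg(M)$ into the full sum of $H_M$ over $[\bg(M),\reg(M)]$. Since $H^0_\mfrak(M)$ is concentrated in degrees between $\bg(M)=\indeg(M)$ and $\ed(H^0_\mfrak(M))\leq\reg(M)$, every graded piece $\dim_k H^0_\mfrak(M)_\mu$ occurs inside the range of summation; moreover $H_M(\mu)\geq \dim_k H^0_\mfrak(M)_\mu$ termwise. The point is to distribute: bound $\deg(M)$ by the single top term $H_{\ol M}(\reg M)\leq H_M(\reg M)$, and bound $\dim_k H^0_\mfrak(M)=\sum_{\mu} \dim_k H^0_\mfrak(M)_\mu$ by $\sum_{\mu}\dim_k H_{H^0_\mfrak(M)}(\mu)$, checking that the two groups of terms can be charged to distinct degrees (or, where they overlap, that the full value $H_M(\mu)=H_{\ol M}(\mu)+\dim_k H^0_\mfrak(M)_\mu$ covers both contributions simultaneously).

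The main obstacle is precisely this bookkeeping: one must verify that $\deg(M)+\dim_k H^0_\mfrak(M)$ does not exceed $\sum_{\mu=\bg(M)}^{\reg(M)} H_M(\mu)$ without double-counting a degree. The cleanest way I see is to use that $H_{\ol M}$ is nondecreasing on $[\ol r,\reg M]$ with value $\geq\deg(M)$ at the top, so that for $\mu$ in this upper range $H_M(\mu)\geq H_{\ol M}(\mu)\geq\deg(M)$ contributes the degree part, while on the lower range the finite-length part $H^0_\mfrak(M)$ is accounted for by $H_M(\mu)\geq\dim_k H^0_\mfrak(M)_\mu$; comparing the total sum against the single quantity $\deg(M)$ plus the length of $H^0_\mfrak(M)$ then gives (i), since the sum has at least $\reg(M)-\ol r+1$ terms each $\geq\deg(M)$ plus the local cohomology contributions. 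I expect that once the ranges are matched up correctly the inequality is in fact comfortable, with the real work being to phrase the degree comparison so it is valid even when $H^0_\mfrak(M)$ is large relative to $\deg(M)$.
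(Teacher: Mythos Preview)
Your proof of (ii) is correct and is exactly what the paper does: reduce to $\bg(M)=0$ and invoke Lemma~\ref{Hilb1}(ii).

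For (i), your approach is sound and in fact closes immediately; the ``bookkeeping'' you worry about is a phantom. Once you have the termwise splitting $H_M(\mu)=H_{\ol M}(\mu)+\dim_k H^0_\mfrak(M)_\mu$ and the inequality $\deg(M)\leq H_{\ol M}(\ol r)$ from Lemma~\ref{BoundExt3}(ii), you are done: since $\bg(M)\leq\ol r\leq\reg(M)$, the single nonnegative term $H_{\ol M}(\ol r)$ is one summand of $\sum_{\mu=\bg(M)}^{\reg(M)}H_{\ol M}(\mu)$, and $\dim_k H^0_\mfrak(M)=\sum_{\mu=\bg(M)}^{\reg(M)}\dim_k H^0_\mfrak(M)_\mu$ on the nose because $H^0_\mfrak(M)$ is supported in that range. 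Adding and using the splitting gives (i). There is no double-counting to guard against, because the $\ol M$-part and the $H^0_\mfrak(M)$-part of $H_M$ are disjoint summands in every degree; you are not charging two quantities to the same $H_M(\mu)$, you are charging them to the two halves of $H_M(\mu)$.

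This differs from the paper's route for (i). The paper does not bound $\deg(M)$ by a single value of $H_{\ol M}$; instead it takes a generic linear system of parameters $l_1,\ldots,l_d$ and uses
\[
\deg(M)=\deg(\ol M)\leq\dim_k\bigl(\ol M/(l_1,\ldots,l_d)\ol M\bigr)=\sum_{\mu=\bg(M)}^{\reg(M)}\dim_k\bigl[\ol M/(l_1,\ldots,l_d)\ol M\bigr]_\mu\leq\sum_{\mu=\bg(M)}^{\reg(M)}H_{\ol M}(\mu),
\]
the artinian quotient being supported in $[\bg(M),\reg(M)]$ since its regularity is at most $\reg(\ol M)$. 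Your argument outsources the s.o.p.\ reduction to Lemma~\ref{BoundExt3}(ii) and thereby bounds $\deg(M)$ by a single term rather than the whole sum; this is slightly sharper and shorter, but relies on a result proved elsewhere in the paper. Either way, you should drop the hedging in your write-up and simply state the two-line argument.
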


\begin{proof} (i) Let $l_1,...,l_d$ be a generic linear s.o.p.  of $M$ and  $\bar{M}= 
M/H^0_{\mfrak}(M)$.  Note that
$\reg(\bar{M}/ (l_1,...,l_d)\bar{M} ) \leq \reg(\bar{M}) \leq \reg(M)$ and $\bg(\bar{M}/ 
(l_1,...,l_d)\bar{M} ) \geq \bg(\bar{M})\geq \bg(M)$. Hence
$$\begin{array}{ll} \deg(M) & = \deg(\bar{M})  \leq \dim_k ( \bar{M}/ (l_1,...,l_d)\bar{M} ) 
\\
& = \sum_{\mu = \bg(M)}^{\reg (M)}\dim_k[\bar{M}/ (l_1,...,l_d)\bar{M}]_\mu \leq 
\sum_{\mu = \bg(M)}^{\reg (M)} \dim_k(\bar{M}_\mu ).
\end{array}$$
On the other hand, $H^0_{\mfrak}(M)_\mu =0$ for all $\mu <\bg(M)$ and $\mu >\reg(M)$. 
This yields
$$ \deg(M) + \dim_kH^0_{\mfrak}(M) \leq \sum_{\mu = \bg(M)}^{\reg (M)} 
[\dim_k(\bar{M}_\mu ) + \dim_kH^0_{\mfrak}(M)_\mu ] = \sum_{\mu = \bg(M)}^{\reg (M)} 
H_M(\mu ).$$

(ii) We may assume that $\bg(M) =0$.  Then the inequality follows from Lemma \ref{Hilb1}(ii).
\end{proof}

\begin{Lemma} \label{C-hdeg}  Let $ \bar{M} = M/H^0_{\mfrak}(M)$ and $\bar{r}=\reg(\bar{M})$.  
\begin{itemize}
\item[(i)] If $d\leq 1$, then $ \hdeg(M) = \dim_k (H^0_{\mfrak}(M))+ H_{\bar{M}}(\bar{r} )$.
\item[(ii)] If $d\geq 2$, then
$\hdeg(\Ext^{n-1}_R(M,R)) \leq (H_{\bar{M}}(\bar{r} ) - \deg(M))H_{\bar{M}}(\bar{r} )$.
\end{itemize}
\end{Lemma}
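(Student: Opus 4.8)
The plan is to treat the two parts separately, reducing each to the finite-length and one-dimensional situations that are already under control. For part (i), I would first use property (b) recorded after Definition \ref{hdeg} to write $\hdeg(M)=\hdeg(\bar M)+\dim_k H^0_{\mfrak}(M)$, so that everything reduces to computing $\hdeg(\bar M)$. If $d=0$ then $\bar M=0$ and there is nothing left to do. If $d=1$, then $\bar M$ has positive depth and dimension one, hence is Cohen--Macaulay, so property (a) gives $\hdeg(\bar M)=\deg(\bar M)=\deg(M)$; and Lemma \ref{BoundExt3}(i) identifies $\deg(M)=P_M(\bar r)=H_{\bar M}(\bar r)$. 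Combining these yields the asserted equality.

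For part (ii), write $E:=\ext^{n-1}_R(M,R)$ and $a:=H_{\bar M}(\bar r)$. Since $\ext^{n-i}_R(M,R)$ has dimension at most $i$, the module $E$ has dimension $\le 1$, so part (i) applies to $E$ and gives $\hdeg(E)=\dim_k H^0_{\mfrak}(E)+\deg(E)$ (the last term being $\deg(\bar E)=\deg(E)$, or $0$ if $E$ has finite length). The key quantitative input is a uniform bound on the graded pieces of $E$: the computation (\ref{BExt1c}) in the proof of Theorem \ref{BoundExt1}, taken at the first step, gives $\dim_k E_\mu\le P_M(\bar r_1-1)$ for all $\mu$, where $\bar r_1=\reg(\ol{M_1})$ and $M_1=M/l_1M$. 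Since $P_M(\bar r)-P_M(\bar r_1-1)=\sum_{\nu=\bar r_1}^{\bar r}P_{M_1}(\nu)$ and each summand is at least $\deg(M)$ by Lemma \ref{BoundExt3} applied to $M_1$ (which has dimension $d-1\ge1$), I would conclude $\dim_k E_\mu\le a-\deg(M)$ for every $\mu$, and in particular $\deg(E)\le a-\deg(M)$.

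To finish, applying Lemma \ref{B-deg}(i) to $E$ bounds $\hdeg(E)=\deg(E)+\dim_k H^0_{\mfrak}(E)$ by $\sum_{\mu=\indeg(E)}^{\reg(E)}H_E(\mu)\le (a-\deg(M))\big(\reg(E)-\indeg(E)+1\big)$. Now $\indeg(E)\ge-\bar r-n+1$ by Theorem \ref{BoundExt1}, while Theorem \ref{newregextMR}(2)(b) gives $\reg(E)+(n-1)\le\max\{P_M(\bar r-1)-\bar r,\,-\indeg(M)-1\}$; hence $\reg(E)-\indeg(E)+1\le\max\{P_M(\bar r-1)+1,\ \bar r-\indeg(M)\}$. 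When the maximum is attained by the first term, $P_M(\bar r-1)+1\le(a-\deg(M))+1\le a$ (using $\deg(M)\ge1$), and the desired bound $\hdeg(E)\le(a-\deg(M))\,a$ follows at once.

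The main obstacle is the remaining case, where the regularity estimate is governed by $-\indeg(M)-1$ and the crude interval has length $\bar r-\indeg(M)$, which need not be $\le a$. The point is that Lemma \ref{B-deg} overcounts here: only the support of the finite-length part $H^0_{\mfrak}(E)$ truly contributes to $\hdeg(E)$, whereas the interval up to $\reg(E)$ is inflated by the one-dimensional part of $E$. To close this gap I would analyze $H^0_{\mfrak}(E)$ directly rather than through $\reg(E)$, exploiting the finite-length module $\ext^n_R(N_1,R)\cong(E/l_1E)(1)$, where $N_1=\ol{M}/l_1\ol{M}$, together with graded local duality to bound the width of the support of $H^0_{\mfrak}(E)$ by $a$; the degenerate possibility $a=\deg(M)$ is harmless, since then $\dim_k E_\mu\le0$, forcing $E=0$ and $\hdeg(E)=0$.
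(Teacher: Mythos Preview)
Your argument for (i) is correct and matches the paper's. For (ii), your approach parallels the paper's closely: the bound $\dim_k E_\mu\le a-\deg(M)$ is right (the paper derives the same inequality, phrased as (\ref{C-hdegb})), and so is the use of Lemma~\ref{B-deg}(i) to convert this into a bound on $\hdeg(E)$ via the length $\reg(E)-\indeg(E)+1$. But the gap you identify is genuine and your sketch does not close it. Applying Theorem~\ref{newregextMR}(2)(b) directly to $M$ leaves the term $-\indeg(M)-1$, and there is no reason $\bar r-\indeg(M)\le a$; your proposed fix via $\ext^n_R(N_1,R)\cong(E/l_1E)(1)$ and ``the width of the support of $H^0_{\mfrak}(E)$'' is not carried out and would not obviously yield the needed bound.

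The paper's device is exactly the one you bypass: after reducing to $H^0_{\mfrak}(M)=0$, truncate and set $M':=M_{\ge r}$ with $r=\bar r$, and $E'_1:=\ext^{n-1}_R(M',R)$. The exact sequence (\ref{Eregext}) gives $\dim_k H^0_{\mfrak}(E)\le\dim_k H^0_{\mfrak}(E'_1)$ and $\deg(E)=\deg(E'_1)$, so it suffices to bound $\sum_{\mu=\indeg(E'_1)}^{\reg(E'_1)}\dim_k(E'_1)_\mu$. Now $\indeg(M')=\reg(M')=r$, so in Theorem~\ref{newregextMR}(2)(b) the second entry of the $\max$ becomes $-r-1$, which is dominated by $H_M(r)-r-1$; hence $\reg(E'_1)\le H_M(r)-r-n$, while $\indeg(E'_1)\ge -r-n+1$, giving interval length $\le H_M(r)=a$. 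The rest is exactly your ``first case''. The truncation is the missing idea.
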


\begin{proof}
(i) The statement is trivial for  $d=0$.

If $d=1$, then by (\ref{E-hdeg}) 
$\hdeg(M)= \deg(\bar{M}) + \dim_k (H^0_{\mfrak}(M))$. Since $\dim(\bar{M} )=1$, by Lemma \ref{BoundExt3}(i), $\deg(\bar{M}) = P_{\bar{M}}(\bar{r}) = H_{\bar{M}}(\bar{r} )$. Hence $ \hdeg(M) = \dim_k (H^0_{\mfrak}(M))+ H_{\bar{M}}(\bar{r} )$.

(ii) Let  $d\geq 2$. Without loss of generality we may assume that $M = \bar{M}$, i.e. $\dept(M) > 0$ and hence $r := \reg(M) = \bar{r}$.  For simplicity, let $E_1 = \Ext^{n-1}_R(M,R)$. Since $\dim(E_1) \leq 1$ (see \cite[p. 63]{Sc}), by (\ref{E-hdeg}),
$$\hdeg(E_1) = \dim_k (H^0_{\mfrak}(E_1)) + \deg(E_1).$$
Let $M' = M_{\ge r}$ and $E'_1:= \Ext^{n-1}_R(M',R)$. Using the exact sequence (\ref{Eregext}) we get $\dim_k (H^0_{\mfrak}(E_1)) \leq \dim_k (H^0_{\mfrak}(E'_1))$ and $\deg(E_1) = \deg(E'_1)$.
Hence, by Lemma \ref{B-deg}(i) we get
\begin{equation}  \hdeg(E_1)  \leq  \dim_k (H^0_{\mfrak}(E'_1)) + \deg(E'_1) \leq  \sum_{\mu = \indeg(E'_1)}^{\reg (E'_1)} \dim_k ((E'_1)_\mu) . \label{C-hdega}
 \end{equation}
 Since $\dept (M') >0$ and $\reg(M') = r$, by Theorem \ref{newregextMR}
 $$\reg(E'_1) + n-1 \leq \max\{ P_{M'}(r-1) - r, \ -r-1\}.$$
 Let $y$ be a generic linear form. Note that $r\leq \indeg (M'/yM') \leq \reg(M'/yM') \leq  \reg(M') =r$. This implies that $M'/yM'$ is generated in degree $r$ and by Lemma \ref{BoundExt3}(ii),
 $$H_{\ol{M'/yM'}}(r) \geq \deg (M'/yM') = \deg(M).$$
 Therefore, by Lemma \ref{BoundExt3}(i), we get
 \begin{eqnarray}
 P_{M'}(r-1) & =& P_{M'}(r) - P_{M'/yM'}(r) = H_{M'}(r) - H_{\ol{M'/yM'}}(r) \nonumber\\
  &\leq & H_M(r) - \deg(M). \label{C-hdegb}
  \end{eqnarray}
This yields
$$\reg(E'_1) + n-1 \leq \max\{ H_M(r) - \deg(M) - r, \ -r-1\} \leq H_M(r) - r -1.$$
Thus
$$\reg(E'_1) \leq H_M(r) - r -n.$$
By Theorem \ref{regExtMN}(1),
$$\indeg(E'_1) \geq -r -n +1.$$
By Theorem \ref{BoundExt1} and the inequality (\ref{C-hdegb}),
$$\dim_k((E'_1)_\mu ) \leq P_M(r-1) \leq  H_M(r) - \deg(M),$$
for all $\mu$. Hence, by (\ref{C-hdega}) we finally obtain
$$\hdeg(E_1) \leq (\reg(E'_1) - \indeg(E'_1) + 1) ( H_M(r) - \deg(M)) \leq ( H_M(r) - \deg(M))H_M(r),$$
as required.
\end{proof}

The following result gives a bound on the cohomological degree in terms of the Hilbert polynomial. 

\begin{Theorem} \label{B-hdeg2} Let $M$ be a non-zero finitely generated graded  
$R$-module of dimension $d\geq 1$. Let $ \bar{M} = M/H^0_{\mfrak}(M)$ and $\bar{r}=\reg(\bar{M})$. Then
$$\hdeg(M) \leq \displaystyle \dim_k (H^0_{\mfrak}(M))+ (P_{M}(\bar{r} ))^{2^{(d-1)^2}}.$$
\end{Theorem}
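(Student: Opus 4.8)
The plan is to reduce to the case of positive depth and then induct on $d$. Using property (b), namely $\hdeg(M)=\hdeg(\bar M)+\dim_k(H^0_{\mfrak}(M))$, together with $P_M=P_{\bar M}$, $\reg(\bar M)=\bar r$ and $\dim\bar M=d$, the assertion becomes equivalent to $\hdeg(\bar M)\leq P_M(\bar r)^{2^{(d-1)^2}}$. Hence I may assume $H^0_{\mfrak}(M)=0$, so that $\dept(M)>0$ and $r:=\reg(M)=\bar r$, and I must prove $\hdeg(M)\leq Q^{2^{(d-1)^2}}$ with $Q:=P_M(r)=H_M(r)$. The base case $d=1$ is immediate from Lemma \ref{C-hdeg}(i), which gives $\hdeg(M)=H_M(r)=Q=Q^{2^{0}}$.

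For the inductive step ($d\geq 2$) I expand the defining formula (\ref{E-hdeg}), writing $E_j:=\Ext^{n-j}_R(M,R)$, so that $\hdeg(M)=\deg(M)+\sum_{j=0}^{d-1}\binom{d-1}{j}\hdeg(E_j)$. Since $\dept(M)>0$, the module $E_0=\Ext^n_R(M,R)$ is graded-dual to $H^0_{\mfrak}(M)=0$ and so vanishes, and the term $j=0$ drops out. The term $j=1$ is controlled directly by Lemma \ref{C-hdeg}(ii): as $\dim E_1\leq 1$ and $M=\bar M$, it gives $\hdeg(E_1)\leq (Q-\deg M)Q\leq Q^2$. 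It remains to bound $\hdeg(E_j)$ for $2\leq j\leq d-1$, using the standard fact that $\dim E_j\leq j\leq d-1<d$, which opens the induction.

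To estimate $\hdeg(E_j)$ I apply the theorem inductively to $E_j$ when $\dim E_j\geq 1$ (finite-length $E_j$ are even simpler, with $\hdeg(E_j)=\dim_k E_j=\dim_k H^0_{\mfrak}(E_j)$). This yields $\hdeg(E_j)\leq \dim_k(H^0_{\mfrak}(E_j))+P_{E_j}(\bar r_{E_j})^{2^{(j-1)^2}}$, where $\bar r_{E_j}=\reg(E_j/H^0_{\mfrak}(E_j))$. The two quantities on the right are then bounded in terms of $Q$ through the earlier graded estimates. For the graded pieces I use Corollary \ref{BoundExt2}, which gives $\dim_k(E_j)_\mu\leq \binom{\mu+r+n-1}{j-1}Q$; together with $\indeg(E_j)\geq -r-n+1$ from Theorem \ref{regExtMN}(1) and the double-exponential bound $\reg(E_j)\leq[C_{d,d-j}Q]^{2^{d-2}}-r+1-(n-j)$ from Theorem \ref{newregextMR}(2)(c), I bound $P_{E_j}(\bar r_{E_j})\leq\dim_k(E_j)_{\bar r_{E_j}}$ and $\dim_k H^0_{\mfrak}(E_j)\leq\sum_{\mu=\indeg(E_j)}^{\reg(E_j)}\dim_k(E_j)_\mu$ by a binomial coefficient of degree $\leq j$ in $\reg(E_j)$ times $Q$.

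The main obstacle is the final exponent bookkeeping: I must check that summing these bounds stays below $Q^{2^{(d-1)^2}}$. Writing $\reg(E_j)\leq Q^{O(2^{d-2})}$, the preceding estimates give $\hdeg(E_j)\leq Q^{\alpha_j}$ with $\alpha_j=O\!\big((j-1)2^{d-2}\big)\cdot 2^{(j-1)^2}$, whose dominant value over $2\leq j\leq d-1$ is of order $(d-2)\,2^{(d-2)^2+d-2}$. Comparing with $2^{(d-1)^2}=2^{(d-2)^2}\cdot 2^{2d-3}$, the required gap reduces to the elementary inequality $(d-2)2^{d-2}\leq 2^{2d-3}$ (i.e. $d-2\leq 2^{d-1}$), which holds for all $d\geq 2$ and leaves ample room to absorb the weights $\binom{d-1}{j}\leq 2^{d-1}$, the $j=1$ contribution $Q^2$ and the term $\deg M$. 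The degenerate case $Q=1$, which forces small degree, is handled separately. Verifying this slack carefully, and tracking how $\reg(E_j)$ enters the binomial coefficients bounding both $P_{E_j}(\bar r_{E_j})$ and the length of $H^0_{\mfrak}(E_j)$, is the delicate part of the argument.
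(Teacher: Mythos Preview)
Your proposal follows essentially the same approach as the paper's proof: reduction to positive depth, induction on $d$ using Lemma \ref{C-hdeg} for the base cases and for $E_1$, then bounding $\hdeg(E_j)$ for $j\geq 2$ via the inductive hypothesis combined with Theorem \ref{newregextMR} for $\reg(E_j)$ and Corollary \ref{BoundExt2} for the graded pieces, with the degenerate case $Q=1$ handled separately. The paper carries out the exponent bookkeeping explicitly (using the cruder bound $\dim E_i\leq d-1$ rather than your $\dim E_j\leq j$, which makes no difference for the dominant term $j=d-1$), whereas your proposal leaves this verification as a big-$O$ sketch; filling in those constants as the paper does is indeed the only remaining work.
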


\begin{proof} By Lemma \ref{BoundExt3}(i) it is equivalent to prove that
$$\hdeg(M) \leq \displaystyle \dim_k (H^0_{\mfrak}(M))+ (H_{\bar{M}}(\bar{r} ))^{2^{(d-1)^2}}.$$
We do induction on $d$. For the simplicity, we set $E_j := \Ext_R^{n-j}(M,R)$ and $H := H_{\bar{M}}(\bar{r} )$. The case $d= 1$ was proved in Lemma \ref{C-hdeg}. 

Let $d=2$. Then, by  Lemma \ref{C-hdeg} we get
$$\begin{array}{ll} 
\hdeg(M) & = \dim_k (H^0_{\mfrak}(M)) + \deg(M) + \hdeg(E_1) \\
&\leq \dim_k (H^0_{\mfrak}(M)) + \deg(M) + (H-\deg(M))H \leq \dim_k (H^0_{\mfrak}(M))  + H^2.
\end{array}$$

Let $d\geq 3$. If $H=1$, then from the exact sequence
$$\bar{M}_{\bar{r}-1} \rightarrow \bar{M}_{\bar{r}} \rightarrow (\bar{M}/y \bar{M})_{\bar{r}} \rightarrow 0,$$
where $y$ is a generic linear form, and $(\bar{M}/y \bar{M})_{\bar{r}}\neq 0$ (since $\dim (\bar{M}/y \bar{M})>0$ and $\bar{M}/y \bar{M}$ is generated in degrees at most $\bar{r}$), we get that $\bar{M} \cong R/I(-\bar{r})$ for some homogeneous ideal $I$ and $\reg(R/I) =0$. Hence $I$ is generated by linear forms, and $\bar{M}$ is a Cohen-Macaulay module. In this case, by (\ref{E-hdeg}) 
$$\hdeg(M) = \dim_k (H^0_{\mfrak}(M)) + \deg(M) = \dim_k (H^0_{\mfrak}(M)) + 1,$$
and the above required inequality trivially holds.

From now on we assume that $H\geq 2$. Fix an $i$ such that $2\leq i \leq d-1$. In the sequel we want to bound $\hdeg(E_i)$. Hence, for this part, we may assume that $\dept (M) >0$, and so $r : =\reg(M) = \bar{r}$. 

By   Theorem \ref{newregextMR} and Lemma \ref{BoundExt3}(i), 
$$ \reg (E_i)  \leq (C_{d,d-i} H)^{2^{d-2}} - \bar{r} +1 - n+i,$$
 where $C_{d,j} = \max\{ {d-1\choose j}, {d-1\choose j+1}\}$. Note that
$\sum_{2j\leq d-1}{d-1 \choose 2j} = \sum_{2j+1\leq d-1}{d-1 \choose 2j+1} = 2^{d-2}.$
Therefore  $C_{d,j} \leq 2^{d-2} -1  $ for all $j$ and $d\geq 3$.  Since $H\geq 2$, this implies
$$ C_{d,d-i} H \leq (2^{d-2} - 1)H \leq H^{d-1} - 2 .$$
Hence
\begin{equation} \label{EBh3} \reg (E_i)  \leq (H^{d-1} - 2)^{2^{d-2}} - \bar{r} +1 - n+i,
\end{equation} 
Using Corollary \ref{BoundExt2}, we see that the following holds  for all $\mu \leq \reg(E_i)$
\begin{equation} \label{EBh4} H_{E_i}(\mu  ) \leq   {(H^{d-1} - 2)^{2^{d-2}} + i \choose i-1} H \leq {(H^{d-1} - 2)^{2^{d-2}} + d -1  \choose d-2} H. \end{equation}
Using also the inequality ${a+\delta  \choose \delta} < (a+1)^\delta $ for all $a$ and $\delta \geq 1$, from (\ref{EBh4}) we get
\begin{eqnarray} 
 H_{E_i}(\reg(E_i) ) & \leq &  H ( (H^{d-1}-2)^{2^{d-2}} + 2)^{d-2}\nonumber \\
& \leq & H(H^{(d-1)2^{d-2}} - 2\cdot 2^{d-2} +2 )^{d-2} \nonumber \\
& < & H^{(d-1)(d-2)2^{d-2} +1} .\label{EBh5}
\end{eqnarray}
By induction on $d$ it is easy to check that $(d-2)(d-1)2^{d-2} +1  < 2^{2d-3} -2$ for all $d\geq 3$. Hence, the above inequality yields
\begin{equation} \label{EBh6} H_{E_i}(\reg(E_i) ) <  H^{2^{2d-3} -2} .  \end{equation}
On the other hand, by Theorem \ref{regExtMN}(1), $\indeg(E_i) \geq -\bar{r} - n + i$. Using
Lemma \ref{B-deg}(i) together with (\ref{EBh3}) and (\ref{EBh5}) we have
\begin{eqnarray}\dim_k (H^0_{\mfrak}(E_i)) & < & (\reg(E_i) - \indeg(E_i) +1)H^{(d-1)(d-2)2^{d-2} +1} \nonumber\\
 &\leq & H^{(d-1)2^{d-2}}H^{(d-1)(d-2)2^{d-2} +1} \nonumber\\
&< &  H^{2((d-1)(d-2)2^{d-2} +1)} \nonumber\\
& < & H^{2(2^{2d-3} -2)} .\label{EBh7}
\end{eqnarray}
Since $\dim E_i \leq d-1$ (see \cite[p. 63]{Sc}), by the induction hypothesis, (\ref{EBh6}) and (\ref{EBh7}) we get
\begin{eqnarray} \hdeg(E_i) & < & H^{2(2^{2d-3} -2)} + (H^{2^{2d-3} -2} )^{2^{(d-2)^2}}\nonumber \\
& \leq & 2H^{2^{(d-1)^2} -2^{(d-2)^2+1}}  \nonumber \\
& \leq & 2\frac{ H^{2^{(d-1)^2}}}{2^{2^{(d-2)^2+1}}}.\label{EBh8}
\end{eqnarray}

Now we are ready to estimate $\hdeg(M)$. Using (\ref{E-hdeg}), (\ref{EBh7}), (\ref{EBh8}) and  Lemma \ref{C-hdeg}, we finally get
$$\begin{array}{ll}
\hdeg(M) &= \deg(M) + \dim_k (H^0_{\mfrak}(M)) + \hdeg(E_1) + \sum_{i=2}^{d-1} {d-1\choose i}\hdeg(E_i)\\
& < \dim_k (H^0_{\mfrak}(M)) + \deg(M) + (H-\deg(M))H + \displaystyle \sum_{i=2}^{d-1} {d-1\choose i}2\frac{ H^{2^{(d-1)^2}}}{2^{2^{(d-2)^2+1}}} \\
&< \dim_k (H^0_{\mfrak}(M)) + \displaystyle 2^d\frac{ H^{2^{(d-1)^2}}}{2^{2^{(d-2)^2+1}}}  \\
&< \dim_k (H^0_{\mfrak}(M)) +\displaystyle H^{2^{(d-1)^2}}.
\end{array}$$
In the last estimation we have used the obvious inequality $2^{(d-2)^2+1} >d$ for all $d\geq 3$.
\end{proof}

Now we can prove Theorem \ref{B-hdeg1} as follows:
\vskip0.5cm

\begin{proof}[Proof of Theorem \ref{B-hdeg1}] Set $\d :=\reg (M)-\indeg (M)$.  If $\d =0$, then $r=\overline r$,  $\mu (M)=H_M (r)=H_{\overline M}(\overline r )=P_M (\overline r)+ \dim_k (H^0_{\mfrak}(M))$ and the result follows from Theorem \ref{B-hdeg2}. If $\d >0$, by Theorem \ref{B-hdeg2}, Lemma \ref{B-deg} and Lemma \ref{Hilb1} we have
$$\begin{array}{ll}
\hdeg(M) &\leq \mu (M){\d+ n \choose n} +  \left[\mu (M){\d+ n-1 \choose n-1}\right]^{ 2^{(d-1)^2}} \\
& \leq \mu (M){\d + n \choose n} +  \left[\mu (M){\d + n \choose n} - 1\right]^{ 2^{(d-1)^2}} \\
& \leq \left[\mu (M){\d + n \choose n} \right]^{ 2^{(d-1)^2}},
\end{array}$$
as required.
\end{proof}

As an immediate consequence of Theorem \ref{B-hdeg1} we obtain
\begin{Corollary} \label{B-hdeg3} Let $I$ be a homogeneous ideal of $R$. Then
$$\hdeg(R/I) \leq \left[{\reg (R/I)  + n\choose n}\right]^{ 2^{(d-1)^2}} .$$
\end{Corollary}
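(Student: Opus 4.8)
Corollary \ref{B-hdeg3} states that for a homogeneous ideal $I \subseteq R$,
$$\hdeg(R/I) \leq \left[\binom{\reg(R/I) + n}{n}\right]^{2^{(d-1)^2}}.$$

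The plan is to derive this directly from Theorem \ref{B-hdeg1} by specializing to the module $M = R/I$. Theorem \ref{B-hdeg1} gives, for any non-zero finitely generated graded $R$-module of dimension $d > 0$, the bound
$$\hdeg(M) \leq \left[\mu(M)\binom{\reg(M) - \indeg(M) + n}{n}\right]^{2^{(d-1)^2}}.$$
So the entire task reduces to evaluating the three module invariants $\mu(M)$, $\indeg(M)$, and $\reg(M)$ in the special case $M = R/I$, and checking that the resulting quantity is bounded by the claimed expression.

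First I would observe that $R/I$ is a cyclic module: it is generated by the image of $1 \in R$ in degree $0$, so $\mu(R/I) = 1$ and $\indeg(R/I) = 0$. The sole remaining ingredient is that $\dim(R/I) = d > 0$, which is exactly the hypothesis needed to apply Theorem \ref{B-hdeg1} (a homogeneous ideal $I$ with $R/I$ of positive dimension is automatically a proper ideal, so $R/I \neq 0$). Substituting $\mu(R/I) = 1$ and $\indeg(R/I) = 0$ into the bound of Theorem \ref{B-hdeg1} collapses the bracketed base to $1 \cdot \binom{\reg(R/I) - 0 + n}{n} = \binom{\reg(R/I) + n}{n}$, and raising to the power $2^{(d-1)^2}$ yields precisely the asserted inequality.

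There is essentially no obstacle here: the corollary is a one-line specialization, and the only thing to verify is that the hypotheses of Theorem \ref{B-hdeg1} are met by $R/I$. The mild point to confirm is that the dimension $d$ appearing in the corollary is understood to be $\dim(R/I)$ and is assumed positive (so that the theorem applies); if $\dim(R/I) = 0$ then $R/I$ has finite length and $\hdeg(R/I) = \deg(R/I) = \dim_k(R/I)$ must be handled by the degenerate conventions, but the stated corollary implicitly takes $d > 0$ in line with Theorem \ref{B-hdeg1}. The remainder is immediate arithmetic.
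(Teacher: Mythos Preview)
Your proposal is correct and matches the paper's approach exactly: the paper states the corollary as an immediate consequence of Theorem \ref{B-hdeg1} without further proof, which amounts precisely to the specialization $\mu(R/I)=1$, $\indeg(R/I)=0$ that you carry out.
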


\end{document}